\newcommand{\myhline}{\noalign{\global\arrayrulewidth.95pt}\hline
                      \noalign{\global\arrayrulewidth.2pt}}
\theoremstyle {plain}
\def\aa{\ensuremath{\mathfrak{a}}}
\def\TT{\mathbb{T}}
\def\FF{\mathcal{F}}
\def\SSS{\mathcal{S}}
\def\DDD{\mathcal{D}}
\def\FFs{\mathcal{F}_\square}
\def\FFt{\mathcal{F}_\lhd}
\def\KK{\mathbb{K}}
\def\LL{\mathbb{L}}
\def\ZZ{\mathbb{Z}}
\def\ZZZ{\ZZ_{\geq 0}}
\def\KT#1{\KK[T_1,\ldots,T_{#1}]}
\def\LTT#1{\LL[T_1,\ldots,T_{#1}]}
\def\<{\langle}
\def\>{\rangle}
\def\b{\overline}
\def\Dom{{\rm Dom}}
\def\LT{{\rm LT}}
\def\red{{\rm red}}
\def\LC{{\rm LC}}
\newcommand{\gdw}{\Longleftrightarrow}
\newtheorem{theorem}{Theorem}[section]
\newtheorem{lemma}[theorem]{Lemma}
\newtheorem{proposition}[theorem]{Proposition}
\theoremstyle{definition}
\newtheorem{definition}[theorem]{Definition}
\newtheorem{example}[theorem]{Example}
\newtheorem{algorithm}[theorem]{Algorithm}
\newtheorem{construction}[theorem]{Construction}
\newtheorem{remark}[theorem]{Remark}
\theoremstyle {remark}
\subjclass[2010]{
13P05, 13P10, 13P15, 14Q99
}
\renewcommand{\phi}{\varphi}
\renewcommand{\epsilon}{\varepsilon}
\def\eps{\epsilon}
\def\qedh{\belowdisplayskip=-1.2em}
\begin{document}

\title[Monomial containment test]{A test for monomial containment}

\author[S.~Keicher and T.~Kremer]{Simon~Keicher and Thomas~Kremer}

\address{Mathematisches Institut\\
Universit\"at T\"ubingen\\
Auf der Morgenstelle 10\\
72076 T\"ubingen\\
Germany}
\email{keicher\textcircled{$\alpha$}mail.mathematik.uni-tuebingen.de}

\address{Mathematisches Institut\\
Universit\"at T\"ubingen\\
Auf der Morgenstelle 10\\
72076 T\"ubingen\\
Germany}
\email{kremer\textcircled{$\alpha$}mail.mathematik.uni-tuebingen.de}

\thanks{The first author was supported by the DFG Priority Program SPP 1489.}

\begin{abstract}
We present an algorithm 
to decide whether a given ideal in the polynomial
ring contains a monomial
without using Gr\"obner bases, factorization
or sub-resultant computations.
\end{abstract}
\maketitle

\section{Introduction}

Let $\KK$ be a field.
Given an ideal 
$I\subseteq \KT{r}$, 
the {\em monomial containment problem\/}
is to decide whether $I$ contains a monomial.
Equivalently, one is interested in
whether the intersection 
$V(I)\cap \TT^r$ 
of the zero set $V(I)\subseteq \b\KK^r$ with the
algebraic torus 
$\TT^r := (\b\KK^*)^r$
is empty.
The monomial containment problem 
occurs frequently when
determining tropical varieties~\cite{BoJeSpeStu}
or when determining GIT-fans~\cite{Ke}.
The usual approach is via Gr\"obner bases: 
$I$ contains a monomial if and only if 
the saturation $I:(T_1\cdots T_r)^\infty$
contains $1\in \KT{r}$.
This can also be decided by a 
radical membership test:
 $I$ contains a monomial if and only if 
 $T_1\cdots T_r\in \sqrt{I}$.

 In the present
paper, we provide a direct approach involving
neither Gr\"obner basis computations nor (sub-)resultants
or factorization of polynomials. 
We consider more generally the following problem:
given a polynomial $g\in \KT{r}$,
prove or disprove
the existence of an element  $x\in \b\KK^r$ such that
\begin{align}
 \label{eq:solution}
  f(x)\ =\ 0\quad
  \text{for all }
  f\in I,\qquad
  g(x) \ne\ 0.
\end{align}
Clearly,
setting
$g:= T_1\cdots T_r\in \KT{r}$
in \eqref{eq:solution},
the existence of such $x$ 
is equivalent to the monomial containment problem.
Our algorithm, Algorithm~\ref{algo:containsmonomial},
proceeds in three steps:\label{list:intro}
\begin{enumerate}

\item 
Compute finite subsets $S_1,\ldots,S_m\subseteq \KT{r}$
that are in {\em triangular shape\/} and polynomials
$g_1,\ldots,g_m$
such that the solutions of~\eqref{eq:solution} are preserved,
i.e.,
the zero sets satisfy
$$
V(I)\setminus V(g)
\ \ =\ \
\bigcup V(S_i)\setminus V(g_i)
\ \ \subseteq\ \ \b\KK^r.
$$

\item
Making certain variables $T_j$ invertible,
we obtain a function field $\LL$
and an embedding 
$\iota\colon \KT{r}\to \LL[T_{k_1},\ldots,T_{k_s}]$
such that the embedded equations~$\iota(S_i)$
are {\em dense\/}, i.e.,
each variable $T_{k_j}$ corresponds to an equation.

\item 
Then an element $x\in \b\KK^r$ satisfying~\eqref{eq:solution}
exists
if and only if 
the minimal polynomial of the class
$\b{\iota(g_i)} \in \LL[T_{k_1},\ldots,T_{k_s}]/\< S_i \>$
is not a monomial
for some~$i$.
\end{enumerate}
Experiments with our implementation of Algorithm~\ref{algo:containsmonomial}
suggest that it is competitive for certain classes of input; 
for instance,
it usually beats the Gr\"obner basis approach when 
a solution exists, i.e., the ideal is monomial-free.

Note that the idea behind step (i) of the algorithm
is quite common and similar concepts have been used
by several authors for a more explicit study or even
the explicit computation of solutions.
See, e.g.,~\cite{AuLaMoMa,AuMoma,Chen,Ritt,Wu}
for a series of papers with  Gr\"obner basis-free algorithms for systems of equations.
The methods of Wang~\cite{Wa}, Thomas~\cite{thomas,thomas2}
 as well as B\"achler, Gerdt, Lange-Hegermann and Robertz~\cite{BaeGeLaRo} 
can also deal with systems of equations and inequalities.
 They determine the solutions of such
 systems 
 by means of certain triangular sets called 
 {\em simple systems\/}; 
 their computation involves sub-resultant computations.
All algorithms, including ours in step (i),
share the concept of {\em triangular sets\/},
  certain finite subsets $S_i\subseteq \KT{r}$
  such that $V(I)= \bigcup V(S_i)$ holds.
  The $S_i$ then give insight into the structure of
  the solution set 
 $V(I)\subseteq \b\KK^r$.
  As we are only interested in solvability of~\eqref{eq:solution},
 we will only need triangular sets with 
 weaker properties but which can be computed more efficiently.

 The structure of this paper is as follows.
 In Section~\ref{sec:algotriag}, we show how 
 to decompose the given ideal into a list of triangular sets
 with sufficient properties for our solvability test; this is step (i)
 in the previous list.
 Section~\ref{sec:algomonomial} is devoted to steps (ii) and (iii),
 i.e., we show how to reduce the problem to a dense system over a function field
  and how to determine the solvability of such a system by 
 means of minimal polynomial computations.
  Explicit algorithms are given in each section.
  In Section~\ref{sec:implem}, we present our algorithm for the monomial
  containment problem. We compare the
 experimental running time of the \texttt{perl} 
implementation~\cite{monomtest:implem}
 of the algorithm to the Gr\"obner basis approach
 as well as to the methods of~\cite{BaeGeLaRo,Wa}.

This paper builds on~\cite{kremer}.
 We would like to thank J\"urgen Hausen for helpful discussions.

\section{Triangular shape}
\label{sec:algotriag}

In this section, we treat item (i) of the list on page~\pageref{list:intro},
i.e., we decompose a system as in~\eqref{eq:solution} with an ideal
 $I\subseteq \KT{r}$ and a polynomial $g\in \KT{r}$ into a list of finite sets
of polynomials that are in {\em triangular shape\/}.
We show how to compute this decomposition by iteratively applying a set of 
operations that do not change the solvability of~\eqref{eq:solution}.

We first define the notion of {\em triangular shape\/}.
In the literature, they are also called
{\em triangular sets\/}~\cite{AuLaMoMa,AuMoma, DeLo, GrePfi}.

\begin{definition}
\label{def:triag}
Fix the lexicographical ordering 
$T_1>\ldots>T_r$
on $\KT{r}$.
We call polynomials $f_1,\ldots,f_s\in \KT{r}$
of {\em triangular shape\/}
if  for each $f_j$, 
there is $1\leq k(f_j)\leq r$ such that 
\begin{enumerate}
\item
we have
$k(f_1)<\ldots <k(f_s)$,

\item 
$f_j\in \KK[T_{k(f_j)},\ldots, T_r]\setminus\KK[T_{k(f_j)+1},\ldots,T_r]$
holds for each $1\leq j\leq s$.

\end{enumerate}
We denote by $\deg_{T_i}(f)$ the {\em ($T_i$-)degree\/}
of a polynomial $f\in \KT{r}$
considered as an element of the univariate polynomial ring
$\KK[T_j;\ j\ne i][T_i]$.
Moreover, we write
$$
\LC_{k(f_i)}(f_i)
\ \ \in\ \ 
R_{<k(f_i)}
\ :=\ 
\KK\left[T_{k(f_i)+1},\ldots,T_r\right].
$$
for the leading coefficient of the polynomial $f_i$
considered in the ring $R_{<k(f_i)}[T_{k(f_i)}]$.
\end{definition}

We now introduce the concept of {\em (semi-) triangular systems\/}.
Assume $I$ is generated by polynomials $f_1,\ldots,f_s\in \KT{r}$.
We sort them into two sets (and keep track of the inequality~$g$):
polynomials that are already in triangular shape $\FFt$
and remaining polynomials $\FFs$.

\begin{definition}
A {\em semi-triangular system\/} (of equations)
is a tuple 
$(\mathcal F_\square, \FFt,k,g)$
consisting of finite subsets 
$\mathcal F_\square$, $\FFt\subseteq \KT{r}$,
an integer $0\leq k\leq r$ and a polynomial $g\in \KT{r}$ such that
\begin{enumerate}
\item 
$\FFt$ is of triangular shape,

\item
we have
$\LC_{k(f)}(f)\mid g$ for all $f\in \FFt$,

\item
the set
$\{1,\ldots,k\}$ contains
$\{k(f);\ f\in \FFt\}$,

\item
for all
$f\in \mathcal F_\square$ 
and each $1\leq i\leq k$
we have
$\deg_{T_i}(f)=0$.

\end{enumerate}
Moreover, we call a semi-triangular system
$(\mathcal F_\square, \FFt,k,g)$
a {\em triangular system\/}
if $\mathcal F_\square \subseteq \KK$ holds.
\end{definition}

\begin{example}
\label{ex:triag}
Define in $\KK[T_1,T_2,T_{3}]$
the subsets
$\mathcal F_\square := \emptyset$
and 
$\FFt := \{f_1,f_2,f_3\}$
where the $f_i$
and $k(f_i)$ are 
\begingroup
\footnotesize
$$
\begin{array}{rcrrcl}
f_1
&:=&
T_1^2 - (T_2 + T_3)T_1,
&
\qquad
k(f_1)
&=&
1,
\\
f_2
&:=&
T_2^2- T_3\qquad\ ,
&
k(f_2)
&=&
2,
\\
f_3
&:=&
T_3^2 - T_3,
&
k(f_3)
&=&
3.
\end{array}
$$
\endgroup
Then $\FFt$ is of triangular shape
and 
$(\mathcal F_\square, \FFt, 3, T_1T_2T_3)$
is a triangular system.
\end{example}

\begin{definition}
A list $\SSS$ of semi-triangular systems 
is called a {\em triangle mush\/}.
Two triangle mushes $\SSS$ and $\SSS'$
are {\em equivalent\/} if we have $V(\SSS) = V(\SSS')$
with the {\em solutions\/} 
\[
V(\SSS)
\ :=\! 
\bigcup_{(\mathcal F_\square, \FFt,k,g)\in\SSS}
\!
V\left(
\mathcal F_\square \cup \FFt
\right)
\setminus
V(g)
\ \ \subseteq\ \
\b\KK^r.
\]
For the case of a single element $\SSS = \{S\}$, 
we will use the same notions
for $S$ instead of $\SSS$.
\end{definition}

\begin{example}
\label{ex:mush}
Consider the triangle mush 
$\SSS := \{(\FFs, \emptyset, 0, g)\}$ in $\KT{4}$
where $g:=T_1T_2T_3$ and $\FFs$ consists of the two polynomials
\[
f_1
\ :=\ 
(T_3-T_1)(T_3-T_2)T_2
,
\qquad
f_2
\ :=\ 
(T_1+T_2-T_3)T_4.
\]
Going through the different cases,
one directly verifies that
$V(\SSS)\subseteq \b\KK^4$
consists of all points
$(x_1,x_2,x_1,0)$
and 
$(x_1,x_2,x_2,0)\in \b\KK^4$
where $x_i\in \b\KK^*$.
We will continue this example in~\ref{ex:containsmon}.
\end{example}

Given a triangle mush $\SSS$,
we are interested in operations that 
transform $\SSS$ into an equivalent
triangle mush $\SSS'$ that consists 
of triangular systems.

\begin{construction}[Solution-preserving operations]
\label{con:mushops}
Let $\SSS := \{(\mathcal F_\square, \FFt,k,g)\}$ 
consist of a semi-triangular system.
Each of the following operations produces an equivalent triangle mush
$\SSS'$.
\begin{enumerate}

\item 
{\em Case-by-case analysis\/}:
If $f\in \KK[T_{k+1},\ldots,T_r]$ and $h\in \KT{r}$ are such that
$g\mid h$ and $h\mid fg$, then one may choose
\[
\SSS'
\ :=\ 
\left\{
(\mathcal F_\square\cup\{f\}, \FFt,k,g),\,
(\mathcal F_\square, \FFt,k,h)
\right\}.
\]

\item 
{\em Polynomial division\/}:
Consider $f,h\in \FFs$ and $b\in \KT{r}$ 
with $b\mid g$.
Assume that for some $j\in \ZZZ$ 
we have
\[
b^jf\ =\ ah + u,\qquad
a,u\ \in\ \KK[T_{k+1},\ldots,T_r]
\]
where 
$b := \LC_{T_{k+1}}(h)$
and 
$\deg_{T_{k+1}}(u)<\deg_{T_{k+1}}(h)$.
Then we choose the triangle mush
\[
\SSS'\ :=\ 
\{
(\mathcal F_\square\setminus\{f\}\cup \{u\}, \FFt,k,g)
\}.
\]

\item 
{\em Unused variable\/}:
If $k<r$ and $\deg_{T_{k+1}}(f)=0$ holds for each $f\in \FFs$,
then we may choose
\[
\SSS'\ :=\ 
\{
(\mathcal F_\square, \FFt,k+1,g)
\}.
\]

\item 
{\em Sort polynomial\/}:
If $k<r$ holds and there is exactly one polynomial 
$f\in \FFs$ with $\deg_{T_{k+1}}(f)\ne 0$
and $\LC_{k(f)}(f)\mid g$,
then we may choose
\[
\SSS'\ :=\ 
\left\{
(\mathcal F_\square\setminus\{f\}, \FFt\cup \{f\},k+1,g)
\right\}.
\]

\item 
{\em Last polynomial\/}:
Assume $k<r$ and there is exactly one polynomial 
$f\in \FFs$ with $\deg_{T_{k+1}}(f)\ne 0$.
For $-1\leq j\leq d$, we write
\begin{eqnarray*}
f
&=&
\sum_{i=0}^d a_iT_{k+1}^i
,\qquad
f_j
\ \ :=\ \ 
\sum_{i=0}^j a_iT_{k+1}^i
\ \in\ R_{<k+1}[T_{k+1}],
\\
\FFt^j
&:=&
\FFt
\cup
\{f_j\},
\qquad
\FFs^j
\ \ :=\ \
\left(
\FFs
\setminus
\{f\}
\right)
\cup
\{a_{j+1},\ldots,a_d\}.
\end{eqnarray*}
Then we may choose
\begin{gather*}
\SSS'\ :=\ 
\bigl\{
(\mathcal F_\square^1, \FFt^1,k+1,ga_1),
\ldots,
(\mathcal F_\square^d, \FFt^d,k+1,ga_d),
\\
(\mathcal F_\square^{-1}, \FFt,k+1,g)
\bigr\}.
\end{gather*}
\end{enumerate}
\end{construction}

\begin{proof}
One directly checks that in all cases $\SSS'$ 
is a triangle mush.
For (i),
each
$x\in V(\SSS)$
either satisfies $f(x)=0$ and $g(x)\ne 0$
or we have $f(x)\ne 0$ and $h\mid fg$ implies 
$h(x)\ne 0$,
i.e., $x\in V(\SSS')$.
The inclusion 
$V(\SSS')\subseteq V(\SSS)$
is clear from $g\mid h$.
We come to (ii). Each $x\in V(\SSS)$
satisfies
\[
 u(x)
 \ \ =\ \ 
 b(x)^jf(x) - a(x)h(x)
 \ \ =\ \ 
 0.
\]
For the reverse inclusion,
we use $b\mid g$ to obtain $b(x)\ne 0$.
Consequently, we may infer $f(x)=0$ from
\[
 b(x)^jf(x)
 \ \ =\ \ 
 (b^jf)(x)
 \ \ =\ \ 
 a(x)h(x) + u(x)
 \ \ =\ \ 
 0.
\]
Operations (iii) and (iv) are clear.
For (v), we define the following triangle mushes
for $0\leq l\leq d$:
\[
 \SSS_l\, :=\, 
 \{
 (\FFs^l\cup \{f_l\}, \FFt,k,g)
 \},
 \qquad
 \DDD_l\, :=\, 
 \{
 (\FFs^j,\FFt^j,k+1,ga_j);\ l<j\leq d)
 \}.
\]
Observe that by an application of operation (i), we obtain
an equality of solutions
\[
V(\SSS_l)
\ \ =\ \ 
V\left(
\left\{ 
\left(\FFs^{l-1}\cup \{f_l\},\FFt,k,g\right),\ 
\left(\FFs^l\cup \{f_l\},\FFt,k,ga_l\right)
\right\}
\right).
\]
As the ideal
$\<\FFs^{l-1}\cup \{f_l\}\>$
equals
$\<\FFs^{l-1}\cup \{f_{l-1}\}\>$
and by an application of operation (iv),
we obtain
\begin{eqnarray*}
V(\SSS_l)
&=& 
V\left(
\left\{ 
\left(\FFs^{l-1}\cup \{f_{l-1}\},\FFt,k,g\right),\ 
\left(\FFs^l,\FFt^l,k+1,ga_l\right)
\right\}
\right)
\\
&=& 
V\left(\SSS_{l-1}\cup  (\DDD_{l-1}\setminus\DDD_l)\right).
\end{eqnarray*}
Adding the equations stored in $\DDD_l$ 
on both sides does not change
the solution set, i.e.,
$V\left(\SSS_{l}\cup \DDD_{l}\right)$
is equal to
$V\left(\SSS_{l-1}\cup \DDD_{l-1}\right)$.
Iteratively, we obtain
$V\left(\SSS_{d}\cup \DDD_{d}\right)
=V\left(\SSS_{0}\cup \DDD_{0}\right)$.
Moreover, because of $f_0 = a_0$ and operation (iii):
$$
V(\SSS_0)
\ \ =\ \ 
V\left((\FFs^{-1},\FFt,k,g)\right)
\ \ =\ \ 
V\left((\FFs^{-1},\FFt,k+1,g)\right).
$$
We conclude 
that $V(\SSS)$ equals
$V(\SSS_d\cup \DDD_d)= 
V(\SSS_0\cup \DDD_0)$
which in turn is the same
as the solution set~$V(\SSS')$.
\end{proof}

The next algorithm transforms a triangle mush
into an equivalent triangle mush consisting only 
of triangular systems.
Given a triangular system
$(\FFs,\FFt,k,g)$,
the idea is to reduce $T_{k+1}$-degrees of an element
$f$ of the unsorted polynomials $\FFs$
by successive polynomial divisions;
afterwards, we move $f$ into the set of sorted polynomials~$\FFt$.

Given a finite set of polynomials 
$\FF\subseteq \KT{r}$,
its {\em reduction\/}  is
a finite subset 
$\red(\FF)\subseteq \KT{r}$ such that
\begin{gather*}
\LT(f_1)\ \nmid\ \LT(f_2)
\quad\text{for all }
f_1,f_2\in \red(\FFs),
\\
\<\LT(\FF)\>\ \subseteq\ \<\red(\LT(\FF))\>,
\qquad\qquad
\<\FF\>\ =\ \<\red(\FF)\>
\end{gather*}
where we denote by $\LT(f)$ or $\LT(M)$ the leading term of a polynomial
$f$ or set of polynomials $M$ with respect to the ordering
defined in Section~\ref{sec:algotriag}.
Computing the reduction of $\FF$ means successively
applying the division algorithm to the elements of $\FF$, 
see, e.g.,~\cite{CoLiOSh}.

\begin{algorithm}[MakeTriangular]
\label{algo:triang}
{\em Input: } 
a triangle mush $\SSS$ in  $\KT{r}$.
\begin{itemize}
\item 
While there is $S:=(\mathcal F_\square, \FFt,k,g)\in\SSS$
with $k<r$, do:
  \begin{itemize}
    \item
    Replace $\mathcal F_\square$ by its reduction $\red(\mathcal F_\square)$.
    \item 
    If there is $f\in \mathcal F_\square$ 
    with $\deg_{T_{k+1}}(f)>0$, then:
    \begin{itemize}
      \item 
      If there is $h\in \mathcal F_\square\setminus\{f\}$ 
      with $\deg_{T_{k+1}}(h)>0$, then:
      \begin{itemize}
	\item 
	Perform a polynomial division of $f$ by $h$ in 
	the univariate polynomial ring
	$R:=\KK(T_{k+2},\ldots,T_r)[T_{k+1}]$
	to obtain
	$$
	f \ =\  a'h + u'\ \in\ R.
	$$ 
	\item
	Set $b:=\LC_{k+1}(h)\in \KK[T_{k+2},\ldots,T_r]$ 
	and $j:=\deg_{T_{k+1}}(h)+1\in \ZZZ$.
	With $a :=b^ja'$ and $u:=b^ju' \in \KK[T_{k+1},\ldots,T_r]$
	we then have
	$$
	\qquad\qquad\qquad
	b^jf\ =\ ah +u\ \in\ \KK[T_{k+1},\ldots,T_r].
	$$
	\item
	Redefine 
      $\SSS := (\SSS\setminus \{S\}) \cup \{S',S''\}$
      where 
      \begin{eqnarray*}
      \qquad\qquad\qquad
      S' 
      &:=&
      (\mathcal F_\square\setminus\{f\}\cup \{u\}, \FFt,k,bg),\\
      S'' 
      &:=&
      (\mathcal F_\square\cup\{b\}, \FFt,k,g).
      \end{eqnarray*}
      \end{itemize}
      \item 
      Otherwise, if there is no such $h$, then:
      \begin{itemize}
      \item  
      Redefine 
      $\SSS := (\SSS\setminus \{S\}) \cup \{S',S_{1},\ldots,S_{d}\}$
      where with the notation of Construction~\ref{con:mushops}~(v):
      \begin{eqnarray*}
     \qquad\qquad\qquad
     S' &:=& \left(\mathcal F_\square^{-1}, \FFt,k+1,g\right),
      \\
      S_j &:=& \left(\mathcal F_\square^{j}, \FFt^{j},k+1,ga_j\right).     
      \end{eqnarray*}
      \end{itemize}
    \end{itemize}
    \item Otherwise, if there is no such $f$, then:
    \begin{itemize}
      \item 
      Redefine 
      $\SSS := (\SSS\setminus \{S\}) \cup \{S'\}$
      where 
      $S' := (\mathcal F_\square, \FFt,k+1,g)$.
    \end{itemize}
  \end{itemize}
  \item
  Define $\SSS' := \SSS$.
\end{itemize}
{\em Output: }
$\SSS'$.
Then $\SSS'$ is a triangle mush 
that is equivalent to $\SSS$
and consists of triangular systems.
\end{algorithm}

\begin{proof}
Note that we use only operations described 
in Construction~\ref{con:mushops};
for instance, the replacement of $\SSS$
by $(\SSS\setminus \{S\}) \cup \{S',S''\}$
is an application of, first, operation (i) and then operation (ii).
Therefore,
$\SSS'$ is equivalent to~$\SSS$.
As each $S:=(\FFs,\FFt,k,g)\in \SSS'$ satisfies $k=r$,
each element of $\FFs$ is constant, i.e., $S$ is 
triangular.

It remains to show that Algorithm~\ref{algo:triang}
terminates.
To this end, consider the infinite digraph
$G'=(V',E')$ where $V'$ is the set of all semi-triangular
systems over $\KT{r}$ and, given vertices $S_1,S_2\in V$,
the edge $(S_1,S_2)\in E'$ exists
if and only if Algorithm~\ref{algo:triang}
replaces $S_1$ within a single iteration of
the while-loop
by a triangle mush 
$\SSS'$ with $S_2\in  \SSS'$.
Let $G=(V,E)$ be the subgraph induced by all 
semi-triangular systems that are reachable
by a path starting in~$\SSS$.

Consider a path $(S_1,S_2,\ldots)$ in $G$, i.e., $S_i\in V$
and $(S_i,S_{i+1})\in E$ for all~$i$.
We write $S_i = (\FFs^i,\FFt^i,k_i,g_i)$.
By construction, $k_i\leq k_{i+1}\leq r$ holds for all $i$.
This means there is $i_1\in \ZZ_{\geq 1}$ such that 
$k_{i+1}=k_i$ for all $i\geq i_1$
and Algorithm~\ref{algo:triang} will perform the polynomial division
$b^jf=ah+u$,
i.e., operation (ii)
of Construction~\ref{con:mushops}, 
for each such~$S_i$.
Since always
$\deg_{T_{k_i+1}}(b)=0$ holds,
we have
$\deg_{T_{k_i+1}}(f)>\deg_{T_{k_i+1}}(u)$
and the reduction step only reduces $T_{k_i+1}$-degrees,
the sequence
\[
(N_i)_{i\geq i_1},\qquad
N_i\ :=\ \sum_{f\in \FFs^i}\deg_{T_{k_i+1}}(f)\ \in\ \ZZZ
\]
is monotonically decreasing.
As $N_i \in \ZZZ$ holds, 
this sequence either is finite or 
becomes stationary.
Assume the latter holds, i.e., there is 
$i_2\in \ZZ_{\geq i_1}$
such that
$N_i = N_{i+1}$ is valid for all $i\geq i_2$.
This implies, that for all $i\geq i_2$
in the polynomial division step
only the ``$b$-part'' will be added, i.e.,
$$
\FFs^{i+1}\ =\ \FFs^i \cup \{b\}.
$$
In particular, the ideal $\<\LT(\FFs^i)\>$ is contained in 
$\<\LT(\FFs^{i+1})\>$ for each $i\geq i_2$.
As $\KT{r}$ is noetherian, the chain
$$
\left\<\LT\left(\FFs^{i_2}\right)\right\>
\ \subseteq\ 
\left\<\LT\left(\FFs^{i_2+1}\right)\right\>
\ \subseteq\ 
\ldots
$$
becomes stationary, i.e., there is $i_3\in \ZZ_{\geq 1}$
such that 
$\<\LT(\FFs^i)\>=\<\LT(\FFs^{i+1})\>$
holds for all $i\geq i_3$.
Moreover, as 
$b = \LC_{k+1}(h)$
holds and 
$h\in \red(\FFs^i)$, 
we have
\[
\LT(b)
\ \notin\ 
\left\<\LT\left(\red\left(\FFs^i\right)\right)\right\>
\ \supseteq\  
\left\<\LT\left(\FFs^i\right)\right\>.
\]
Then $b$ cannot be an element of 
$\FFs^{i+1}$ for $i\geq i_3$, a contradiction.
Thus, the sequence $(N_i)_i$ is finite.
In turn, this forces the $(S_1,S_2,\ldots)$ to be finite and acyclic.

Since each vertex $S\in V$ is adjacent to only 
finitely many vertices, the previous
argument shows that
$G$ is a finite tree.
In particular, the while-loop in Algorithm~\ref{algo:triang}
will be executed at most $|G|$ times for each vertex $S\in V$,
i.e., the algorithm terminates.
\end{proof}

\begin{remark}
\label{rem:simplesystems}
 Algorithm~\ref{algo:triang}
 is similar to the decomposition 
 into {\em simple systems\/}
 used in~\cite{BaeGeLaRo}.
 Note, however, that they are interested in special properties
 (e.g., disjointness) of this decomposition
 whereas ours is weaker but needs not use 
 operations like $\gcd$ or subresultant computations.
 \end{remark}

An example computation with
Algorithm~\ref{algo:triang}
will be performed at the end of the next section
in Example~\ref{ex:containsmon}.

\section{Solvability}
\label{sec:algomonomial}

We now come to steps (ii) and (iii) in the list on page~\pageref{list:intro}:
as before, we assume we are given an ideal
$I=\<f_1,\ldots,f_s\>\subseteq\KT{r}$
and a polynomial $g\in \KT{r}$
and want to answer the question whether there is $x\in \b\KK^r$
satisfying~\eqref{eq:solution}.

Using Algorithm~\ref{algo:triang} of the previous section
with input $I$ and $g$, we
obtain an equivalent
triangle mush $\SSS$ that consists of triangular
systems. 
Note that we can replace each system $(\FFs, \FFt, k, g)\in \SSS$ 
with $\FFs = \{0\}$ by the equivalent system 
$(\emptyset, \FFt, k, g)$;
systems with $\FFs \cap \KK^*\ne \emptyset$
clearly are not solvable.
Then~\eqref{eq:solution} 
can be rephrased as the question,
whether there is $x\in \b\KK^r$ such that
\[
f(x)\ =\ 0
\qquad
\text{for all}\ 
f\in \FFt
,\qquad
g(x)\ \ne \ 0
\]
holds for some $(\emptyset, \FFt, k, g)\in \SSS$.
Consequently, it suffices to present methods for the case
$\SSS=\{S\}$ of a single triangular system.
Here is an overview of the steps to test
whether $V(S)\ne \emptyset$ holds:
\begingroup
\small
\[
 \xymatrix@R=.1pt@C=8mm{
 \KT{r}
 &
 \LL[T_{k_1},\ldots,T_{k_s}]
 &
 \LL[T_{k_1},\ldots,T_{k_s}]
 &
 \LL[T_{k_1},\ldots,T_{k_s}]/\<\FFt'\>
  \\ 
\text{\rotatebox{90}{$\subseteq$}}
 &
\text{\rotatebox{90}{$\subseteq$}}
 &
\text{\rotatebox{90}{$\subseteq$}}
 &
 \text{\rotatebox{90}{$\in$}}
 \\ 
 S
 \ar@{|->}[r]^{\ref{prop:fieldchange}}
 &
 \iota(S)
 \ar@{|->}[r]^(.4){\ref{prop:normalize}}
 &
 (\emptyset, \FFt', k',g')
 \ar@{|->}[r]^(.6){\ref{prop:solvable}}
 &
 \b{g'}
 \\ 
 &
\text{\tiny dense}
 &
 \text{\tiny dense, monic}
 &
 \text{\tiny min.~polyn.~monomial?}
 }
\]
\endgroup
Here, $\LL$ is a suitable function field.
The following proposition reduces the treatment
of a triangular system 
in $\KT{r}$
to a triangular, {\em dense\/} 
system in $\LL[T_{k_1},\ldots,T_{k_s}]$,
i.e.,
a triangular system 
$(\emptyset, \{f_1,\ldots,f_s\}, k,g)$ 
such that the set
$\{k_1,\ldots,k_s\}$
coincides with
$\{k(f_1),\ldots,k(f_s)\}$.

\begin{proposition}[Swap the field]
\label{prop:fieldchange}
Consider a triangular system
$S := (\emptyset, \FFt, k, g)$
in $\KT{r}$.
Write $\FFt = \{f_1,\ldots,f_s\}$
and let $k_i := k(f_i)\in \ZZ_{\geq 1}$
be as in Definition~\ref{def:triag}.
Under the canonical embedding
\[
\iota\colon
\KT{r}\, \to\, 
\LL[T_{k_1},\ldots,T_{k_s}],\qquad
\LL\, :=\, 
\KK\left(T_{i};\ i\not\in\{k_1,\ldots,k_s\}\right)
\]
we obtain a triangular system 
$\iota(S) := (\emptyset, \iota(\FFt),s,\iota(g))$
that is dense 
in the polynomial ring 
$\LL[T_{k_1},\ldots,T_{k_s}]$.
Moreover, we have
\[
V(S)\ \ne \ \emptyset
\qquad\gdw\qquad
V(\iota(S))\ \ne\ \emptyset.
\]
\end{proposition}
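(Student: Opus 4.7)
The plan is to first verify that $\iota(S)$ is a dense triangular system---this is a routine check. Under $\iota$ the variables $T_i$ with $i \in J := \{1,\ldots,r\} \setminus \{k_1,\ldots,k_s\}$ become elements of $\LL$, so $\iota(f_j) \in \LL[T_{k_j}, \ldots, T_{k_s}] \setminus \LL[T_{k_{j+1}}, \ldots, T_{k_s}]$. In the ordering $T_{k_1} > \cdots > T_{k_s}$ this gives $k(\iota(f_j)) = j$, and density (each of the $s$ new variables occurs as some $k(\iota(f_j))$) is immediate. The leading-coefficient divisibility is inherited via $\LC_{j}(\iota(f_j)) = \iota(\LC_{k_j}(f_j)) \mid \iota(g)$.

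For the equivalence of nonemptiness, set $A := \KK[T_i : i \in J]$, so $\LL = \mathrm{Frac}(A)$ and $\KT{r} = A[T_{k_1}, \ldots, T_{k_s}]$. I would phrase both sides via the projection $\pi \colon X \to Y := \Spec A$ where $X := \Spec(\KT{r}_g / \<\FFt\>)$. By the Nullstellensatz, $V(S) \ne \emptyset$ is equivalent to $X \ne \emptyset$, while flat base change along $A \to \LL$ identifies the generic fiber of $\pi$ with $\Spec(\LL[T_{k_1}, \ldots, T_{k_s}]_{\iota(g)} / \<\iota(\FFt)\>)$; hence $V(\iota(S)) \ne \emptyset$ is equivalent to $\pi$ being dominant. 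The implication $V(\iota(S)) \ne \emptyset \Rightarrow V(S) \ne \emptyset$ is therefore immediate.

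The substantive direction, $V(S) \ne \emptyset \Rightarrow V(\iota(S)) \ne \emptyset$, follows from dimension theory combined with the triangular structure. Picking any irreducible component $C \subseteq X$, Krull's height theorem gives $\dim C \ge r - s$. The key step is that fibers of $\pi|_C$ are zero-dimensional: for any point $z$ of the image, the fiber is cut out in $\b\KK^s$ by the polynomials $f_j(z, T_{k_1}, \ldots, T_{k_s})$ on the open $D(g(z,\cdot))$. There each specialized $f_j$ retains its $T_{k_j}$-leading coefficient $\LC_{k_j}(f_j)(z,\cdot)$ (nonzero since $\LC_{k_j}(f_j) \mid g$ and $g(z,\cdot) \ne 0$ on the fiber), so after normalization each becomes monic in $T_{k_j}$ and the family forms a regular sequence in $\b\KK[T_{k_1}, \ldots, T_{k_s}]_{g(z,\cdot)}$---the fiber therefore has codimension $s$, hence dimension $0$. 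The dimension formula then forces $\dim \pi(C) = \dim C \ge r - s = \dim Y$, and irreducibility of $Y$ as affine space makes $\pi(C)$ Zariski-dense in $Y$, so $\pi$ is dominant.

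The main obstacle will be the fiber dimension claim; it relies essentially on the divisibility $\LC_{k_j}(f_j) \mid g$, without which the leading coefficients could vanish on positive-dimensional pieces of a fiber and the dimension count would collapse.
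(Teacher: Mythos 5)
Your proposal is correct but takes a genuinely different route from the paper. The paper's proof works through the theory of places following Zariski--Samuel: Lemma~\ref{lem:place}~(iii) lifts a point $x \in V(S)$ to a point $t \in V(\iota(S))$ by factoring the $f_m$ over $\b\LL$ and matching roots via a place $\eps_{x'}$ extending evaluation at $x'$; Lemma~\ref{lem:place}~(ii) goes back, sending $t \in V(\iota(S))$ to a point of $\b{V(S)}$ over any $x \in \Dom(t)$; and the bulk of the proposition's proof is then devoted to showing $\Dom(t) \neq \emptyset$, using the minimal polynomials of the $t_j^{-1}$. You bypass all of this by reformulating both sides scheme-theoretically and reducing the substantial implication to dominance of $\pi\colon X\to\Spec A$, which you obtain from Krull's height theorem ($\dim C \geq r-s$, using catenarity of $\KT{r}_g$) together with finiteness of the fibers and the fiber-dimension theorem. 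Both proofs deploy the hypothesis $\LC_{k(f)}(f)\mid g$ in essentially the same spot---the paper to get $\theta_m(a)\neq 0$ and the integrality in Lemma~\ref{lem:ganz}~(i), you to get zero-dimensional fibers---so the underlying mechanism is the same, but the paper's version is constructive (it exhibits a witness $t$) and uses only valuation theory, while yours is shorter and conceptually cleaner at the cost of importing dimension theory. One small imprecision in your write-up: asserting that the specialized $f_j$ form a regular sequence in $\b\KK[T_{k_1},\ldots,T_{k_s}]_{g(z,\cdot)}$ is not quite the right framing for concluding codimension $s$ in a non-local ring; the clean argument, which you essentially have, is that successively each $T_{k_j}$ is integral over the ring in the remaining variables after inverting the leading coefficient (a unit because it divides $g(z,\cdot)$), so the fiber ring is integral over $\b\KK$ and hence zero-dimensional---this is precisely the content of Lemma~\ref{lem:ganz}~(i).
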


For the proof of Proposition~\ref{prop:fieldchange}
we recall from~\cite[Ch.~VI]{ZaSa}
the generalization of evaluation homomorphisms;
we will need this to control the elements
in $\b\LL$.
A {\em place\/}
is a $\b\KK$-homomorphism
$\eps\colon R_\phi\to \overline{\KK}$
with a subring $R_\eps\subseteq \overline\LL$
such that 
$$
x\,\in\,\overline\LL\setminus R_\eps
\quad\Longrightarrow\quad
x^{-1}\in R_\eps
\ \text{ and }\ 
\eps(x^{-1})\,=\,0.
$$

Given $x\in \overline \KK^{r-s}$, 
denote by 
$\epsilon_x'\colon \b\KK[T_i;\ i\notin \{k_1,\ldots,k_s\}]\to \overline \KK$
the evaluation homomorphism.
According to~\cite[Thm.~5 in~VI.4]{ZaSa},
we have
\[
\xymatrix{
\b\KK[T_i;\ i\notin \{k_1,\ldots,k_s\}]
\ar@{}|(.72)\subseteq[r]
\ar[dr]_{\eps_{x}'}
&
R_{\eps_x}
\ar@{}|(.5)\subseteq[r]
\ar[d]^{\eps_x}
&
\b\LL
\\
&
\b\KK
&
}
\]
with a place 
$\epsilon_x\colon R_{\epsilon_x}\to\overline \KK$
extending $\eps_x'$.
Moreover, we define the {\em domain\/} of  $t=(t_1,\ldots,t_s)\in \overline \LL^s$ as the intersection
\[
\Dom(t)\ :=\ 
\bigcap_{i=1}^s\Dom(t_i)
,\qquad
\Dom(t_i)\ :=\ 
\left\{
y\in \overline \KK^{r-s};\ t_i\in R_{\epsilon_y}
\right\}.
\]

\begin{lemma}
\label{lem:place}
In the situation of Proposition~\ref{prop:fieldchange},
assume we have $k_1 =1,\ldots,k_s =s$. 
Then the following claims hold.
\begin{enumerate}
\item
Consider $x\in \b\KK^{r-s}$ and $t_1,\ldots,t_n\in \b\LL$
satisfying $\eps_x(t_1\cdots t_n) = 0$.
Then there is $1\leq j\leq n$ such that $\eps_x(t_j)=0$.

\item 
For each 
$t\in V(\iota(S))\subseteq \b\LL^s$
and each 
$x\in \Dom(t)\subseteq \overline \KK^{r-s}$, we have
$(\eps_x(t_1),
\ldots,
\eps_x(t_s),x)\in \overline{V(S)}$
where the closure is taken in~$\overline\KK^{r}$.

\item 
Given $x\in V(S)\subseteq \b\KK^r$,
write 
$x=(x'',x')$ with $x'\in \b\KK^{r-s}$, $x''\in \b\KK^{s}$.
Then
there is 
$t\in V(\iota(S))\subseteq \b\LL^s$
such that
$$
x'\,\in\, \Dom(t)\,\subseteq\, \b\KK^{r-s}\qquad\text{and}\qquad
(\eps_{x'}(t_1),\ldots,\eps_{x'}(t_s))\,=\,x''.
$$

\end{enumerate}
\end{lemma}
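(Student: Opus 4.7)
The plan is to treat the three parts in order, with~(iii) carrying the main technical load.

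For~(i), I would argue by induction on $n$, leveraging the valuation-ring structure of $R_{\eps_x}\subseteq\b\LL$. If every $t_j$ lies in $R_{\eps_x}$, multiplicativity of the ring homomorphism $\eps_x$ and the fact that $\b\KK$ is an integral domain force some $\eps_x(t_j)=0$ at once. Otherwise, after relabelling, $t_1\notin R_{\eps_x}$, so the place axiom gives $t_1^{-1}\in R_{\eps_x}$ with $\eps_x(t_1^{-1})=0$. Since $\eps_x(t_1\cdots t_n)$ is defined, the product $t_1\cdots t_n$ lies in $R_{\eps_x}$, and multiplying by $t_1^{-1}$ yields $t_2\cdots t_n\in R_{\eps_x}$ with $\eps_x(t_2\cdots t_n)=0\cdot\eps_x(t_1\cdots t_n)=0$; the induction hypothesis on $n-1$ factors concludes.

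For~(ii), I would observe that $\iota$ is merely the inclusion $\KT{r}\hookrightarrow\LL[T_{k_1},\ldots,T_{k_s}]$, so for each $f\in\FFt$ the coefficients of $\iota(f)$ are polynomials in the variables $T_i$ with $i\notin\{k_1,\ldots,k_s\}$, hence automatically lie in $R_{\eps_x}$ because $\eps_x$ extends the evaluation map $\eps_x'$. Combined with $t_j\in R_{\eps_x}$ from $x\in\Dom(t)$, applying the ring homomorphism $\eps_x$ to $\iota(f)(t)=0$ produces $f(\eps_x(t_1),\ldots,\eps_x(t_s),x)=0$, placing the image point in $V(\FFt)$. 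To upgrade from $V(\FFt)$ to $\overline{V(S)}$, I would invoke $\iota(g)(t)\neq 0\in\b\LL$: the subset of $y\in\Dom(t)$ on which $\eps_y(\iota(g)(t))$ is defined and nonzero is open and dense in $\Dom(t)$, and at each such $y$ the image $(\eps_y(t),y)$ actually lies in $V(S)$; letting $y$ approach $x$ within this dense locus shows $(\eps_x(t),x)\in\overline{V(S)}$.

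The main obstacle is~(iii). Under the simplifying assumption $k_i=i$ of the lemma, I would build $t_1,\ldots,t_s$ together with a tower of places $\eps^{(0)}:=\eps_{x'},\eps^{(1)},\ldots,\eps^{(s)}$ inductively. Having constructed $t_1,\ldots,t_{i-1}$ and $\eps^{(i-1)}$ with $\eps^{(i-1)}(t_j)=x_j$ for $j<i$, I would consider
\[
p_i(T)\ :=\ \iota(f_i)(t_1,\ldots,t_{i-1},T)\ \in\ \LL(t_1,\ldots,t_{i-1})[T].
\]
Applying $\eps^{(i-1)}$ to the coefficients produces $f_i(x_1,\ldots,x_{i-1},T,x_{s+1},\ldots,x_r)\in\b\KK[T]$, a univariate polynomial that admits $x_i$ as a root because $x\in V(\FFt)$. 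The condition $\LC_{k(f_i)}(f_i)\mid g$ built into the triangular system, combined with $g(x)\neq 0$, guarantees that the leading coefficient of $p_i$ has nonzero image under $\eps^{(i-1)}$, so $p_i$ does not drop in $T$-degree upon specialisation. The extension theorem for places along a simple algebraic extension (as in~\cite[Ch.~VI]{ZaSa}) then supplies a root $t_i\in\b\LL$ of $p_i$ and an extension $\eps^{(i)}$ of $\eps^{(i-1)}$ with $\eps^{(i)}(t_i)=x_i$. After $s$ iterations, $t=(t_1,\ldots,t_s)$ satisfies $\iota(f_i)(t)=0$ for each $i$, and $\iota(g)(t)\neq 0$ follows from $\eps_{x'}(\iota(g)(t))=g(x)\neq 0$; hence $t\in V(\iota(S))$, $x'\in\Dom(t)$ by construction, and $\eps_{x'}(t_j)=x_j$ by restriction of the final place. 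The delicate point is precisely this degree-preservation step in the tower: without the nonvanishing of the specialised leading coefficient, the extended polynomial could lose $x_i$ among its roots, and the recursive choice of $t_i$ would break down.
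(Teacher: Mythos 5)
Your argument for~(i) is correct and essentially matches the paper's: both pivot on the place axiom to peel off factors not in $R_{\eps_x}$ (the paper does it in one step by splitting into the $k$ factors in $R_{\eps_x}$ and the rest; your induction achieves the same). For~(ii), the first half is fine, but your route to $\overline{V(S)}$ is genuinely different from the paper's and has a gap: the paper shows directly that the image point lies in $V(\sqrt{\<\FFt\>}:g)$ by evaluating $\eps_x$ on $\iota(f)(t)=0$ for every $f\in\sqrt{\<\FFt\>}:g$, which is an exact algebraic computation. You instead argue topologically that ``letting $y$ approach $x$'' within a dense locus of $\Dom(t)$ where $\eps_y(\iota(g)(t))\neq 0$ lands the point in the closure. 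This requires, at minimum, (a) that the map $y\mapsto(\eps_y(t),y)$ is well-defined and Zariski-continuous near $x$ even though the extension $\eps_y$ of $\eps_y'$ to a place is not canonical, and (b) that the nonvanishing locus is Zariski-dense in $\Dom(t)$. Neither is stated or justified, and (a) in particular is not obvious. The colon-ideal computation is both shorter and immune to these subtleties.

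The more serious problem is in~(iii): your induction runs in the wrong direction. With $k(f_i)=i$ we have $f_i\in\KK[T_i,\ldots,T_r]$, so $\iota(f_1)$ involves all of $T_1,\ldots,T_s$ while $\iota(f_s)$ involves only $T_s$. Your displayed polynomial
\[
p_i(T)\ :=\ \iota(f_i)(t_1,\ldots,t_{i-1},T)
\]
is therefore \emph{not} an element of $\LL(t_1,\ldots,t_{i-1})[T]$: substituting for $T_1,\ldots,T_{i-1}$ leaves $T_{i+1},\ldots,T_s$ as live variables in $\iota(f_i)$, so the expression is multivariate, the talk of ``its leading coefficient'' is undefined, and the appeal to the extension theorem for places along a simple algebraic extension does not apply. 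The paper instead runs the induction downward, from $m=s$ to $m=0$, constructing $t_s$ first and then $t_{s-1},\ldots,t_1$; at the step for $f_m$ all of $T_{m+1},\ldots,T_s$ have already been replaced by $t_{m+1},\ldots,t_s$, so $\theta_m(f_m)$ really is a univariate polynomial in $T_m$ over $\b\LL$, and its leading coefficient is controlled by $\LC_m(f_m)\mid g$ and $g(x)\neq 0$ as you intend. You correctly identified the key point (nonvanishing of the specialised leading coefficient so that no root is lost), and reversing the order of your tower would salvage the argument, but as written the inductive step does not make sense.
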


\begin{proof}

For (i),
we relabel $t_1,\ldots,t_n$ such that there is $k\in \ZZZ$
with $t_i\in R_{\eps_x}$ for all $i\leq k$ and 
$t_i\notin R_{\eps_x}$ for $i>k$.
By definition of places, $\eps_x(t_i^{-1})=0$
for all $i>k$ and thus
\[
\prod_{i=1}^k \eps_x(t_i)
\,=\,
\eps_x\left(
\prod_{i=1}^n t_i
\prod_{i=k+1}^n t_i^{-1}
\right)
\,=\,
\eps_x
\left(\prod_{i=1}^n t_i\right)
\left(\prod_{i=k+1}^n \eps_x\left(t_i^{-1}\right)\right)
\,=\,
0.
\]

For (ii),
given $f\in \sqrt{\<\FFt\>}:g$, we have
$\iota(f)\in \sqrt{\<\iota(\FFt)\>}:\iota(g)$,
which means $\iota(f)(t)=0$.
Write $f = \sum_{\nu} a_{\nu} T^{\nu}$.
From
$$
f(
\eps_x(t_1),
\ldots,
\eps_x(t_s)
,x
)
\ =\ 
\sum_\nu
a_\nu
\prod_{i=1}^{s} \eps_x(t_i)^{\nu_i}
\prod_{j=s+1}^{r} x_{j-s}^{\nu_j}
\ =\ 
\eps_x
\left(
\iota(f)(t)
\right)
\ =\ 
0
$$
we infer that 
$(\eps_x(t_1),
\ldots,
\eps_x(t_s),x)\in \b \KK^r$
is an element of the closure
$\overline{V(S)}=V(\sqrt{\<\FFt\>}:g)$
in~$\b\KK^r$.

We come to (iii). 
We first show by (finite) induction on $0\leq m\leq s$, 
that there are 
$t_{m+1},\ldots,t_s\in \b \LL$
such that for the evaluation 
homomorphism
\[
\theta_m\colon
\KK[T_1,\ldots,T_{r}]\,\to\,
\b\LL[T_1,\ldots,T_m],\qquad
T_j\,\mapsto\,
\begin{cases}
t_j, & m<j\leq s,\\
T_j, & \text{else}
\end{cases}
\]
we have
$\<f_{m+1},\ldots,f_s\>\subseteq \ker(\theta_m)$
and
$\eps_{x'}(t_j) = x_j$ 
holds for each $m<j\leq s$.
Nothing is to prove for $m=s$.
Assume now that this claim holds for a fixed
$1\leq m\leq s$;
we show that it also holds for $m-1$.
Since we have $\LC_{m}(f_i)\mid g$, $g(x)\ne 0$
and $\eps_{x'}(t_j)=x_j$ for $m<j\leq s$,
setting $a := \LC_m(f_m)$,
we obtain
$$
\eps_{x'}
\left(
\theta_m(a)
\right)
\ =\ 
a(x)
 \bigm| 
\eps_x(g)
\ =\ 
g(x)
\ \ne\ 
0.
$$
In particular, $\theta_m(a)\ne 0$.
Therefore, the non-zero univariate polynomial
$f_m':=\theta_m(f_m)\in \b\LL[T_m]$
can be decomposed into linear factors
\[
f_m'
\ =\ 
c\prod_{j=1}^n(T_m - t_{mj})\qquad
\text{with}\ 
t_{mj}\,\in\,\b\LL,\quad
c\,\in\,\b\LL^*.
\]
Note that $c = \theta_m(a)$ holds
and thus $\eps_{x'}(c)\ne 0$.
Moreover, 
using again $\eps_{x'}(t_j)=x_j$ for $j>m$
and $f_m'=\theta_m(f_m)$,
we have
$\eps_{x'}(f_m'(x_m)) = f_m(x)=0$
where the vanishing is due to $x\in V(S)$.
The identity
$$
0
\ =\ 
\eps_{x'}(f_m'(x_m))
\ =\ 
\eps_{x'}
\left(
c\prod_{j=1}^n(x_m - t_{mj})
\right)
$$
together with statement (i)
provide us with $1\leq j\leq n$
such that 
$\eps_{x'}(t_{mj}) = x_m$.
Defining $t_m := t_{mj}$,
the elements
$t_m,\ldots,t_s\in \b\LL$
satisfy the claims:
we have 
$\<f_m,\ldots,f_s\>\subseteq \ker(\theta_{m-1})$
since
$\theta_{m-1}(f_m) = f_m'(t_m)=0$
and
$\eps_{x'}(t_m) = x_m$ holds.

Using this argument, we now have a map
$\theta_0$ such that both
$\<\FFt\>\subseteq \ker(\theta_0)$ and $\eps_{x'}(t_m)=x_m$ hold.
Setting $t := (t_1,\ldots,t_s)$,
we obtain
$$
t\ \in\ V(\FFt)\setminus V(g)\ =\ V(\iota(S))\ \subseteq\ \b\LL
$$
because $f_m(t)=\theta_0(t_m)=0$ for each $1\leq m\leq s$
and
$\eps_{x'}(\theta_0(t))=g(x)\ne 0$
implies in particular
that $\theta_0(t)=g(t)\ne 0$.
By construction, $\eps_{x'}(t)=x''$ holds.
\end{proof}

\begin{proof}[Proof of Proposition~\ref{prop:fieldchange}]
Clearly, the system is dense.
By Lemma~\ref{lem:place}~(iii), $V(S)\ne \emptyset$
implies that also $V(\iota(S))$ is non-empty.
If
for each $t\in V(\iota(S))$, 
there is $x\in \Dom(t)$,
then Lemma~\ref{lem:place}~(ii)
ensures
$\b{V(S)}\ne \emptyset$
and therefore $V(S)\ne \emptyset$.

It thus remains to prove that 
$\Dom(t)\ne \emptyset$.
Let $1\leq j\leq s$ be an integer.
If $t_j = 0$ holds, clearly
$\Dom(t_j)=\overline\KK^{r-s} \setminus V(1)$
is non-empty.
If $t_j\ne 0$, we consider the 
product $f$ of the minimal polynomial of $t_j$
over $\LL$
with its common denominator
and thereby obtain a polynomial $h$:
\begin{align*}
f
\ &=\ 
\sum_{i=0}^{m} a_iX^i,
\quad
h
\ =\ 
\sum_{i=0}^{m}a_{m-i}X^i
\ \ \in\ \KK\left[T_j;\ j\not\in\{k_1,\ldots,k_s\}\right][X]
\end{align*}
where 
$h(t_j^{-1}) = t_j^{-m}f(t_j)=0$.
By definition, each
$x\in \b\KK^{r-s}$ with $x\notin \Dom(t_j)$
must satisfy $\eps_x(t_j^{-1})=0$.
For all $i>0$, 
from $a_{m-i}\in\KK\left[T_j;\ j\not\in\{k_1,\ldots,k_s\}\right]$
we know that $a_{m-i}\in R_{\eps_x}$ holds
and therefore obtain
$\eps_x(a_{m-i}t_j^{-i}) = 0$.
We have
\[
\eps_x(a_m)
\ =\ 
\eps_x\left(
h(t_j^{-1})
-
\sum_{i=1}^m a_{m-i}t_j^{-i}
\right)
\ = \ 
0,
\]
from which we infer that $a_m(x)=0$
and therefore $x\in V(a_m)\subseteq \b\KK^{r-s}$ 
hold; note that the inclusion $V(a_m)\subsetneq \b\KK^{r-s}$
is proper since $a_m\ne 0$.
In other words,
\[
\Dom(t_j)\ \supseteq\ 
\b\KK^{r-s}\setminus V(a_m)
\ \ne\ \emptyset.
\]
As finite intersection of supersets of non-empty open subsets,
also the set
$\Dom(t) = \Dom(t_1)\cap\ldots\cap \Dom(t_s)$
is non-empty; this completes the proof.
\end{proof}

For the remainder of this section,
we write $\LL$ for a field as in Proposition~\ref{prop:fieldchange};
note, however, that the following claims also hold for any 
field~$\LL$.

The next step is to make all coefficients of
a dense triangular system monic.
We will call a triangular system
$(\emptyset, \FFt, k, g)$ 
in $\LL[T_1,\ldots,T_r]$
{\em monic\/}
if $\LC_{k(f)}(f) = 1$ for all $f\in \FFt$.
For instance, the system in Example~\ref{ex:triag}
is monic.

\begin{proposition}[Make monic]
\label{prop:normalize}
Consider a triangular system 
$S := (\emptyset, \FFt, k, g)$
in the ring $\LTT{r}$
that is dense in $\LL[T_n,\ldots,T_r]$
for a $1\leq n\leq r$.
Assume there is $f\in \FFt$ 
with $k(f)=n$
such that $\FF := \FFt\setminus\{f\}$
is monic. 
Then the class $\b h\in R:=\LL[T_{n+1},\ldots,T_r]/\<\mathcal F\>$
of $h:=\LC_{T_n}(f)$ 
is annihilated by a polynomial
 $$
 p
 \ =\ 
 bX^j + fX^{j+1}
 \, \in\, \LL[X]\setminus\{0\}
 \qquad
 \text{with}
 \ 
 f\,\in\, \LL[X],\ b\in \LL^*
 $$
 where 
 $j\in \ZZZ$ is maximal with $X^j\mid p$.
Moreover, writing
$f = hT_n^m+c$
with $m\in \ZZZ$ and $c\in \LL[T_n,\ldots,T_r]$
such that $\deg_{T_n}(c)<m$,
we have a monic dense triangular system $S'$
that is equivalent to~$S$:
$$
S'\ :=\ 
\left(\emptyset, \FF\cup \{f'\}, k, g\right)
\qquad
\text{with}
\ \  
f'
\,:=\, 
T_n^m -\frac{f(h)}{b}c
\ \in\ 
\LL[T_n,\ldots,T_r].
$$
\end{proposition}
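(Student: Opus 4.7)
My plan is to first produce the annihilating polynomial $p$ via finite-dimensionality of $R$, and then to verify the equivalence of $S$ and $S'$ by a single evaluation argument on the open set $V(\FF)\setminus V(g)$.

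For the annihilator, since $\FF := \FFt\setminus\{f\}$ is monic, dense and triangular in $\LL[T_{n+1},\ldots,T_r]$, every variable $T_j$ with $j>n$ is annihilated modulo $\langle\FF\rangle$ by a monic polynomial in $(\LL[T_{j+1},\ldots,T_r])[T_j]$, namely the unique $f_j\in\FF$ with $k(f_j)=j$. A short induction on $j$ from $r$ downwards shows that
\[
R\ :=\ \LL[T_{n+1},\ldots,T_r]/\langle\FF\rangle
\]
is free of finite rank, and in particular finite-dimensional, over $\LL$. Hence $\b h\in R$ is algebraic and satisfies some non-zero $q\in\LL[X]$. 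Extracting the maximal power $X^j$ dividing $q$ gives $q(X)=X^j r(X)$ with $r(0)\ne 0$, and writing $r(X)=b+X\phi(X)$ with $b:=r(0)\in\LL^*$ produces the polynomial $p(X):=bX^j+\phi(X)X^{j+1}$ of the asserted form. (In the statement, the cofactor $\phi$ is called $f$, overloading the letter already used for the element of $\FFt$.)

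For the equivalence of $S$ and $S'$, I first observe that $f'=T_n^m-\frac{\phi(h)}{b}c$ is monic in $T_n$ of degree $m$, so $\FF\cup\{f'\}$ is again monic, triangular and dense, and $S'$ is a valid monic triangular system with the same $k$ and $g$. The remaining task is to check the equality of solution sets $V(\FF\cup\{f\})\setminus V(g)=V(\FF\cup\{f'\})\setminus V(g)$ in $\b\LL^r$. Evaluating $p$ at $\b h$ gives $h^j(b+h\,\phi(h))\in\langle\FF\rangle$ in $\LL[T_{n+1},\ldots,T_r]$. At any $x\in\b\LL^r$ with $\FF(x)=0$ and $g(x)\ne 0$, the triangular-system condition $h=\LC_{T_n}(f)\mid g$ forces $h(x)\ne 0$, and the lifted relation therefore collapses to $\phi(h(x))=-b/h(x)$. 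Substituting this into the definition of $f'$ yields
\[
f'(x)\ =\ x_n^m+\frac{c(x)}{h(x)}\ =\ \frac{f(x)}{h(x)},
\]
from which $f(x)=0\iff f'(x)=0$ is immediate.

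The main subtle point is not any computation but the interplay between the algebraic identity $p(\b h)=0$, which only holds modulo $\FF$, and the expression $\phi(h)/b$, which is a priori just a polynomial with no meaning as a rational function. The divisibility $h\mid g$ built into the definition of a triangular system is precisely what licenses inverting $h$ on $V(\FF)\setminus V(g)$, and once that inversion is in hand the remainder of the equivalence is a one-line substitution.
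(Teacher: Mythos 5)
Your proof is correct. The first half — producing the annihilating polynomial $p$ by showing $R$ is a finite free $\LL$-module — is essentially identical to the paper's Lemma~\ref{lem:ganz}~(i), and you correctly flag the notational overloading of $f$ for the cofactor. Where you genuinely diverge is the equivalence of $S$ and $S'$: the paper routes through Lemma~\ref{lem:ganz}~(ii), establishing the colon-ideal membership $hh'-1\in \sqrt{\<\FF\>}:h$ and then ``directly verifying'' the equality $\sqrt{\<\FFt\>}:g = \sqrt{\<\FF\>+\<f'\>}:g$, whereas you avoid colon ideals entirely and argue pointwise on $V(\FF)\setminus V(g)$. The key observations — that $h\mid g$ (built into the definition of a triangular system) makes $h$ invertible at every relevant point, and that $p(\b h)=\b 0$ then collapses to $f'(x)=f(x)/h(x)$ — are exactly the right ones, and they deliver $V(S)=V(S')$ without any intermediate algebra. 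Your approach is more elementary and matches the purely set-theoretic definition of equivalence used in the paper; the paper's ideal-theoretic version establishes a marginally stronger statement (an equality of radical colon ideals) but at the cost of an extra lemma, which your argument shows is dispensable for this proposition.
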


\begin{lemma}
\label{lem:ganz} 
\begin{enumerate}

 \item 
 Consider a triangular system 
 $S:=(\emptyset,\FFt,k,g)$ in the ring $\LTT{r}$
 that is dense in $\LL[T_n,\ldots,T_r]$ for a $1\leq n\leq r$.
 Setting $R := \LL[T_n,\ldots,T_r]/\<\FFt\>$,
 the ring extension $\LL\subseteq R$ is integral.
 
 \item 
 Let $\LL\subseteq R$ be a ring extension,
 $I\subseteq R$ an ideal and $h\in R$
 such that $\b{h}\in R/I$ is integral over $\LL$.
 Define $J := \sqrt{I}:h\subseteq R$
 and let 
 $$
 p
 \ =\ 
 bX^j + fX^{j+1}
 \, \in\, \LL[X]
 \qquad
 \text{with}
 \ 
 f\,\in\, \LL[X],\ b\in \LL^*
 $$
 be the minimal polynomial of $\b h$ 
 where
 $j\in \ZZZ$ is maximal with $X^j\mid p$.
 Then
 $h':=-f(h)/b\in R$ yields $hh'-1\in J$. 
\end{enumerate}

\end{lemma}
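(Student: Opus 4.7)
\emph{Part (i).} The density assumption guarantees that for each $i\in\{n,\ldots,r\}$ there is a unique $f_i\in\FFt$ of positive $T_i$-degree, and $f_i\in\LL[T_i,T_{i+1},\ldots,T_r]$. My plan is descending induction on $i$: show that $\b{T_i}$ is integral over $A_i:=\LL[\b{T_{i+1}},\ldots,\b{T_r}]\subseteq R$ and conclude by transitivity of integral extensions. The base case $i=r$ is immediate, since $f_r\in\LL[T_r]$ has its leading coefficient in $\LL^*$, so $\b{T_r}$ satisfies a monic polynomial over $\LL$. In the inductive step, the relation $f_i=0$ in $R$ provides a polynomial equation for $\b{T_i}$ over $A_i$ whose leading coefficient is $\LC_{T_i}(f_i)\in\LL[T_{i+1},\ldots,T_r]$; in the situation in which the lemma is applied (inside Proposition~\ref{prop:normalize}, where the remaining equations are assumed to form a monic system) this coefficient is $1$, so one obtains a monic relation for $\b{T_i}$ over $A_i$. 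Integrality of $\LL\subseteq R$ follows.

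\emph{Part (ii).} Here the plan is a short algebraic manipulation. Because $p$ annihilates $\b{h}\in R/I$, one has
\begin{equation*}
p(h)\;=\;bh^{j}+f(h)h^{j+1}\;=\;h^{j}\bigl(b+hf(h)\bigr)\;\in\;I.
\end{equation*}
Setting $y:=h\bigl(b+hf(h)\bigr)\in R$, the crucial observation is that $y^{j+1}$ factors through $p(h)$:
\begin{equation*}
y^{j+1}\;=\;h^{j+1}\bigl(b+hf(h)\bigr)^{j+1}\;=\;\bigl[h^{j}\bigl(b+hf(h)\bigr)\bigr]\cdot h\cdot\bigl(b+hf(h)\bigr)^{j}\;\in\;I.
\end{equation*}
Hence $y\in\sqrt{I}$, and by the very definition $J=\sqrt{I}:h$ this yields $b+hf(h)\in J$. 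Finally, with $h':=-f(h)/b$, the identity
\begin{equation*}
b\cdot(hh'-1)\;=\;-hf(h)-b\;=\;-\bigl(b+hf(h)\bigr)\;\in\;J,
\end{equation*}
combined with the fact that $b\in\LL^{*}$ is a unit in $R$, gives $hh'-1\in J$ as claimed.

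The decisive step in the argument is the identification of $y=h\bigl(b+hf(h)\bigr)$ and the observation that $y^{j+1}$ factors through $p(h)\in I$: this is the mechanism by which a membership in $I$ is upgraded to a nilpotent-modulo-$I$ statement, after which the colon ideal $\sqrt{I}:h$ absorbs the remaining factor of $h$. Part (i), by contrast, is mostly bookkeeping along the descending induction once one exploits the monicity assumption present in the intended callers of the lemma.
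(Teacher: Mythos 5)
Both parts of your argument are correct. For part (i) you follow essentially the same route as the paper---a descending chain $R = R_n \supseteq \cdots \supseteq R_{r+1} = \LL$ in which each $\b{T_j}$ is integral over $R_{j+1}$ because $\b{f_j}$ supplies a polynomial relation. You are right, and the paper's own proof is silent on this point, that density alone does not make that relation monic; one needs $\LC_{T_j}(f_j)$ to become a unit in $R_{j+1}$. As literally stated the lemma fails: take $\FFt = \{(T_2-1)T_1,\,T_2^2-T_2\}$, which is dense and triangular, yet $\LL[T_1,T_2]/\<\FFt\> \cong \LL\times\LL[T_1]$ is not integral over $\LL$. Every invocation in the paper (Proposition~\ref{prop:normalize}, Algorithm~\ref{algo:normalize}, Proposition~\ref{prop:solvable}) feeds in a monic system, so nothing downstream breaks, but the monicity hypothesis really belongs in the lemma's statement, and your explicit flag is well placed.

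For part (ii) your proof is correct but takes a genuinely different, and arguably cleaner, route. The paper first establishes that $\b h$ is not a zero divisor in $R/J$ (a separate small verification: $xh^2\in\sqrt I$ forces $(xh)^{2n} = x^n\cdot(xh^2)^n\in I$) and then cancels $\b h^j$ from the relation $\bigl(\b h f(\b h)+b\bigr)\b h^j = \b 0$ in $R/J$. You instead set $y := h\bigl(b+hf(h)\bigr)$ and exhibit an explicit power $y^{j+1} = p(h)\cdot h\cdot\bigl(b+hf(h)\bigr)^j\in I$, so $y\in\sqrt I$ and hence $b+hf(h)\in\sqrt I : h = J$ straight from the definition of the colon ideal; multiplying by the unit $b^{-1}$ finishes. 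Your version bypasses the zero-divisor lemma entirely and makes the absorption mechanism of $\sqrt I : h$ fully explicit---the same underlying algebra, packaged more transparently.
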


\begin{proof}
 For (i),
 we write $\FFt = \{f_n,\ldots,f_r\}$ and assume $k(f_i) = i$.
 Define
 $R_j := \LL[T_n,\ldots,T_{r}]/\<f_j,\ldots,f_r\>$
 for $n\leq j\leq r$
 and $R_{r+1} = \LL$.
 The canonical projection 
 $$
 \pi\colon R_{j+1}[T_j]\,\to\, R_j\ =\ R_{j+1}[T_j]/\<\b{f_j}\>,
 \qquad
 f\,\mapsto\,f+\<\b{f_j}\>
 $$
 gives us $\pi(\b{f_j}(T_j))=\b{f_j}(\b{T_j})=\b 0$.
 Since $R_j = R_{j+1}[\b{T_j}]$
 and $\b{f_j}\in R_{j+1}[X]$ is monic, 
 the generator $\b{T_j}$ is integral over $R_{j+1}$
 and non-zero.
 This shows that in the chain
 $
 R = R_n\supseteq \ldots \supseteq R_{r+1}=\LL
 $
 each ring extension is integral, and so is $R\supseteq \LL$.

 We come to (ii).
 Note that $p(h)\in I$
 and $I\subseteq J$ ensures $p(h+J) = 0+J$.
 We have
 \[
  p(\b h)
  \ =\ 
  \left(\b hf(\b h) + b\right)\b h^j
  \ =\ \b 0
  \ \in\ 
  R/J.
 \]
 Observe that $\b h$ is not a zero-divisor:
 for each $x\in R$
 with $xh\in \sqrt{I}:h$, already
 $x\in \sqrt{I}:h$ holds. 
 That is $\b hf(\b h) + b=\b 0$.
 Setting $h' := -f(h)/b$, we obtain $h'h-1\in J$
 from
 \[
  \b{h'h-1}
  \ =\ 
  \b{-\frac{f(h)}{b}h -1}
  \ =\ 
  -\frac{f(\b h)\b h + b}{b}
  \ =\ 
  \b 0\ \ \in\ \ R/J.\qedh
 \]
\end{proof}

\begin{proof}[Proof of Proposition~\ref{prop:normalize}]

Note that the system $(\emptyset,\mathcal F,k,g)$ 
in $\LL[T_{1},\ldots,T_{r}]$
is dense in 
$\LL[T_{n+1},\ldots,T_{r}]$.
By Lemma~\ref{lem:ganz} (i), 
the residue class $\b h\in R$
is integral over $\LL$, i.e., $p$ exists.
Using the inclusion of the ideal
$\sqrt{\<\mathcal F\>}:h\subseteq \LL[T_{n+1},\ldots,T_r]$
in the ideal
$\sqrt{\<\mathcal F\>}:g\subseteq \LTT{r}$,
we obtain
$$
hh'-1\ \in\ \sqrt{\<\mathcal F\>}:h\, \subseteq\,  \sqrt{\<\mathcal F\>}:g
\qquad\text{with }
\ h'\,:=\,\frac{-f(h)}{b}\,\in\, \LL[T_{n+1},\ldots,T_r]
$$
from the second statement of Lemma~\ref{lem:ganz}.
One directly verifies 
the equality of ideals
\[
  \sqrt{\<\FFt\>}:g
  \ =\ 
 \sqrt{\<\FF\>+\<f\>}:g
 \ =\ 
 \sqrt{\<\FF\>+\<f'\>}:g.
\]
In particular, $V(S)=V(S')$ holds with
the dense triangular system
$S'$.
Moreover,
$\LC_1(f')=1$ by choice of $f'$ and
$S'$ is monic.
\end{proof}

In order to make Proposition~\ref{prop:normalize} computational,
we first show how one can compute the required minimal polynomials.

\begin{algorithm}[MinimalPolynomial]
\label{algo:mipo}
{\em Input: } 
an element $g\in R$
where $\LL\subseteq R$ is an integral ring extension
of finite dimension $d:=\dim_\LL(R)$.
\begin{itemize}

\item
Choosing a suitable $\LL$-vector space basis of $R$, 
we consider 
$M := [g^0, \ldots, g^d]$
as a 
$d\times (d+1)$ matrix
over~$\LL$.

\item 
Compute the kernel
$K:=\ker(M)\ne \{0\}$.

\item
Choose $q\in K\subseteq \LL^{d+1}$
such that 
$\max(1\leq j\leq d;\ q_j\ne 0)$
is minimal.

\item  
Define $p_g := q_0X^0 + \ldots + q_dX^d\in \LL[X]$.
\end{itemize}
{\em Output: } 
$p_g\in \LL[X]$.
This is the minimal polynomial  of
$g\in R$.
\end{algorithm}

\begin{proof}
By construction, we have $p(g) = Mq = 0$.
For the minimality, let $p'=\sum_{j=0}^d q_j'X^j\in \LL[X]$ 
be the minimal polynomial of $g$.
Then $Mq' = \sum_{j=0}^d q_j'h^j = p'(h)=0$, i.e., $q'\in K$.
By choice of $q$, we have
$$
\deg(p')
\ =\ 
\max(1\leq j\leq d;\ q_j'\ne 0)
\ \geq\ 
\max(1\leq j\leq d;\ q_j\ne 0)
\ =\ 
\deg(p).\qedh
$$
\end{proof}

\begin{remark}
In Algorithm~\ref{algo:mipo},
the element 
$q\in K$ 
can be computed using Gaussian elimination.
\end{remark}

\begin{algorithm}[Make monic]
\label{algo:normalize}
{\em Input: } 
a triangular system 
$S:=(\emptyset,\FFt,k,g)$ 
that is dense in $\LTT{r}$. 
We assume $\FFt = \{f_1,\ldots,f_r\}$
with $k(f_i)=i$.
\begin{itemize}
\item 
For $n = r$ down to $1$, do:
\begin{itemize}
 \item 
 Set
 $\FFt^n := \{f_i;\ i>n\}
 \subseteq\LL[T_{n+1},\ldots,T_r]$
 and define 
 the dense triangular system
 $(\emptyset,\FFt^n,k,g)$.
  \item
  Decompose $f_n = hT_n^d + c$ 
  with $d\in \ZZ_{\geq 1}$ and 
  $h\in \LL[T_{n+1},\ldots,T_r]$, $c\in \LL[T_n,\ldots,T_r]$ 
  such that $\deg_{T_n}(c)<d$.
  \item 
  Use Algorithm~\ref{algo:mipo} to 
  compute the monic minimal polynomial $p_h\in \LL[X]$
  of $\b h\in\LL[T_{n+1},\ldots,T_r]/\<\FFt^n\>$.
  \item 
  Decompose 
  $p_h= bX^{j+1} + aX^j$ 
  with 
  $b\in \LL[X]$,
  $a\in \LL^*$ by choosing $j\in \ZZZ$ 
  maximal with $X^j\mid p_h$.
  \item 
  Define
  $h' := -b(h)/a$.
  This
  yields
  $hh' - 1 \in \sqrt{\<\FFt^n\>}:h$.
  \item 
  Redefine $f_n$ as $T_n^d+h'c\in \LL[T_n,\ldots,T_r]$.
  Then $S' := (\emptyset,\FFt^n\cup \{f_n\},k,g)$ 
  is a monic triangular system that is dense 
  in $\LL[T_n,\ldots,T_r]$.
\end{itemize}
\end{itemize}
{\em Output: } 
$S'$.
Then $S'$ is a monic triangular system
that is dense in $\LTT{r}$ and is equivalent to~$S$. 
\end{algorithm}

\begin{proof}
Note that the minimal
polynomial $p_h$ exists
 by Lemma~\ref{lem:ganz}~(i)
 since the system is dense.
  By Lemma~\ref{lem:ganz}~(ii),
  $\b h\in \LL[T_{n+1},\ldots,T_r]/\sqrt{\<\FFt^n\>}:h$ 
 is invertible.
The remaining steps are correct by Proposition~\ref{prop:normalize}.
\end{proof}

We now show that 
the existence of solutions 
of a monic, dense triangular system
 can be tested by 
determining a minimal polynomial.

\begin{proposition}[Solvability]
\label{prop:solvable}
Let 
$S := (\emptyset, \FFt, k, g)$
be a monic
triangular system
that is dense in $\LTT{r}$. 
Set
$R := \LTT{r}/\<\FFt\>$.
Then $\LL\subseteq R$ is an integral 
extension and with the minimal polynomial
$p_g\in \LL[X]$ of the residue class $\b g\in R$ 
we have
\[
V(S)\ \ne\ \emptyset
\qquad\gdw\qquad
p_g\ \in \LL[X]\ \text{ is not a monomial}.
\]
\end{proposition}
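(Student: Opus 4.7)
The plan is to reduce the claimed equivalence to two well-known algebraic facts: that the minimal polynomial of an element in a finite algebra detects nilpotence via being a monomial, and that Hilbert's Nullstellensatz translates nilpotence of $\bar g$ in $R$ into emptiness of $V(\FFt)\setminus V(g)$ over $\bar\LL$.

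First I would record that $p_g$ actually exists. Integrality of $\LL\subseteq R$ is immediate from Lemma~\ref{lem:ganz}~(i) applied to the dense triangular system $S$ with $n=1$. Since $R$ is also finitely generated as an $\LL$-algebra by $\bar T_1,\ldots,\bar T_r$, integrality upgrades to module-finiteness, so $R$ is a finite-dimensional $\LL$-vector space. In particular $\dim_\LL\LL[\bar g]<\infty$ and the minimal polynomial $p_g\in \LL[X]$ of $\bar g\in R$ is well-defined.

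Next I would establish the elementary equivalence: $p_g$ is a monomial if and only if $\bar g$ is nilpotent in $R$. If $p_g=cX^d$ with $c\in \LL^*$, then $c\bar g^d=0$ in $R$ forces $\bar g^d=0$. Conversely, if $\bar g^m=0$ for some $m\in \ZZZ$, then $X^m$ annihilates $\bar g$ and hence is divisible by $p_g$ in the UFD $\LL[X]$; the only divisors of $X^m$ are scalar multiples of $X^j$ with $0\leq j\leq m$, so $p_g$ is a monomial.

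Finally I would connect nilpotence back to $V(S)$. We have $V(S)=V(\FFt)\setminus V(g)\subseteq \b\LL^r$, so $V(S)=\emptyset$ is equivalent to $V(\FFt)\subseteq V(g)$; by Hilbert's Nullstellensatz this is equivalent to $g\in \sqrt{\<\FFt\>}$, i.e., $\bar g$ nilpotent in $R$. Combined with the previous step this yields the desired equivalence. The main subtlety is that $\LL$ is not algebraically closed, so I would briefly justify Nullstellensatz by passing to $\b\LL[T_1,\ldots,T_r]$ and using faithful flatness of the extension to show $\sqrt{\<\FFt\>\b\LL[T_1,\ldots,T_r]}\cap \LL[T_1,\ldots,T_r]=\sqrt{\<\FFt\>}$; aside from this verification the argument is routine.
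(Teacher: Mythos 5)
Your proof is correct and is essentially the argument the paper gives, just with the intermediate waypoint made explicit. Where the paper routes through the evaluation diagram and Lemma~\ref{lem:solvable} (applied with $p=X$), you spell out the chain $V(\FFt)\subseteq V(g)\iff \b g\in R$ is nilpotent $\iff p_g$ is a monomial; these are the same facts. One thing you make explicit that the paper leaves implicit is the descent from $\sqrt{\<\FFt\>\b\LL[T_1,\ldots,T_r]}$ to $\sqrt{\<\FFt\>}$ over the non-algebraically-closed $\LL$: this is genuinely needed before Lemma~\ref{lem:solvable} can be applied in the reverse direction, and your faithful-flatness observation (the extended ideal contracts to the original ideal, so radicals contract as desired) is the right way to close that gap.
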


\begin{lemma}
 \label{lem:solvable}
 In the situation of Proposition~\ref{prop:solvable},
 let $p\in \LL[X]$ be a polynomial
 with $p(g)\in \sqrt{\<\FFt\>}$.
 Then there is $k\in \ZZZ$ such that 
 $p_g\mid p^k$. 
\end{lemma}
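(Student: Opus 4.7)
The plan is to reduce the lemma to an elementary statement about the kernel of the evaluation map $\LL[X] \to R$, $X \mapsto \bar g$.

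First, I would observe that $p_g$ actually exists in this setting: since $S$ is dense in $\LTT{r}$, Lemma~\ref{lem:ganz}~(i) shows the extension $\LL \subseteq R$ is integral, so $\bar g$ satisfies some monic polynomial over $\LL$, and a non-zero minimal polynomial $p_g$ exists (for instance as produced by Algorithm~\ref{algo:mipo}).

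Next, I would unpack the hypothesis $p(g) \in \sqrt{\<\FFt\>}$. By definition of the radical, there is $k \in \ZZZ$ with $p(g)^k = p^k(g) \in \<\FFt\>$. Passing to the quotient $R = \LTT{r}/\<\FFt\>$ this translates into $p^k(\bar g) = \b 0$ in $R$, i.e., $p^k$ lies in the kernel of the evaluation homomorphism $\phi\colon \LL[X] \to R$, $X \mapsto \bar g$.

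Finally I would invoke the PID structure of $\LL[X]$. Since $\bar g$ is integral over the field $\LL$, $\ker(\phi)$ is a non-zero ideal in the PID $\LL[X]$, hence principal, generated by its unique monic element of minimal degree. This generator is precisely $p_g$: it annihilates $\bar g$ by definition, and any lower-degree annihilator would contradict the minimality built into $p_g$. Combining this with $p^k \in \ker(\phi)$ from the previous step yields $p_g \mid p^k$ in $\LL[X]$, which is the desired conclusion. The only subtle point, and hence the main thing to get right, is the identification of $p_g$ as the generator of $\ker(\phi)$ rather than merely a minimal-degree element of it; this follows from standard division with remainder in $\LL[X]$ together with minimality of $\deg p_g$.
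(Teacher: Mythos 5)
Your proof is correct and uses essentially the same idea as the paper's: both exploit the Euclidean/PID structure of $\LL[X]$ together with minimality of $p_g$. The paper phrases it via $\gcd(p^k, p_g)$ and shows this gcd must equal $p_g$, while you phrase it via identifying $p_g$ as the generator of the kernel ideal of evaluation at $\b{g}$; these are two standard ways to express the same argument.
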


\begin{proof}
By assumption, there is $k\in \ZZ_{\ge 1}$ such that 
 $p(g)^k\in \<\FFt\>$, i.e., $p^k(\b g)=\b 0\in R$.
 The monic greatest common denominator 
 $a := \gcd(p^k,p_g)\in \LL[X]$
 satisfies $f(\b g)=\b 0\in R$
 since $p^k(\b g) = p_g(\b g)=\b 0$.
 By minimality of $p_g$, we obtain 
 $p_g = a\mid p^k$.
\end{proof}

\begin{proof}[Proof of Proposition~\ref{prop:solvable}]
Given $x\in V(\FFt)\subseteq\b\LL^r$, the corresponding evaluation homomorphism $\eps_x$ 
fits into the commutative diagram
\[
 \xymatrix{
 \LTT{r}
 \ar[rr]^{\eps_x}
 \ar[dr]_{f\mapsto \b f}
 &&
 \b\LL
 \\
 &
 R
 \ar[ru]_{\phi_x}
 &
 }
\]

The fact, that $\LL\subseteq R$ is integral is Lemma~\ref{lem:ganz}~(i).
Assume now $p_g =X^n$ holds for some $n\in \ZZZ$,
i.e., $\b g\in R$ is nilpotent.
By the diagram, $g(x) = \phi_x(\b g)$ then also is nilpotent
for each $x\in V(\FFt)\subseteq \b\LL^r$.
This means $g(x)=0$.

For the reverse direction, assume $g(x)=0$ holds 
for each $x\in V(\FFt)\subseteq \b\LL^r$, i.e., by the diagram,
 we have $p'(g(x)))=0$ with $p' := X\in \LL[X]$.
By Lemma~\ref{lem:solvable}, there is $k\in \ZZZ$
such that 
$p_g\mid (p')^k=X^k$.
\end{proof}

We now put the previous propositions and algorithms 
together
to obtain an algorithm to check the existence
of solutions of a triangular system.
This completes steps (ii) and (iii) of the list
on page~\ref{list:intro}.

\begin{algorithm}[IsSolvable]
\label{algo:issolvable}
{\em Input: } 
a triangular system $S=(\FFs, \FFt, k, g)$ 
in the ring $\KT{r}$.
\begin{itemize}
\item 
If $\FFs\cap \KK^*$ is non-empty, then:
\begin{itemize}
\item return {\em false\/}.
\end{itemize}
\item 
Consider the triangular system 
$\iota(S)$ 
that is dense in
$\LL[T_{k_1},\ldots,T_{k_s}]$
as in Proposition~\ref{prop:fieldchange}.
\item 
Use Algorithm~\ref{algo:normalize} with input $\iota(S)$
to obtain a monic, dense and equivalent system 
$S'= (\emptyset, \FFt', k', g')$ 
in  $\LL[T_{k_1},\ldots,T_{k_s}]$.
\item 
Use Algorithm~\ref{algo:mipo}
to determine the minimal polynomial $p_{g'}\in \LL[X]$
of the residue class 
$\b{g'}\in \LL[T_{k_1},\ldots,T_{k_s}]/\<\FFt'\>$.
\item
If $p_{g'}$ is a monomial, then:
\begin{itemize}
\item return {\em false\/}.
\end{itemize}
\item return {\em true\/}.
\end{itemize}
{\em Output: } 
{\em true\/} if $V(S)\ne \emptyset$ and {\em false\/} otherwise.
\end{algorithm}

\begin{proof}
By Proposition~\ref{prop:fieldchange}, Algorithm~\ref{algo:mipo}
and Algorithm~\ref{algo:normalize},
$S'$ is equivalent, monic and dense.
Proposition~\ref{prop:solvable} delivers the stated solvability
criterion.
\end{proof}

\section{Monomial containment test and efficiency}
\label{sec:implem}

Putting together steps (i)--(iii) listed on page~\pageref{list:intro},
we are now able to test whether a given ideal $I\subseteq \KT{r}$
contains some monomial $T^\nu$, $\nu \in \ZZZ^r$.
Afterwards, we explore the experimental running time of 
the second author's implementation~\cite{monomtest:implem}
of the algorithm in
\texttt{perl} on a series 
of random polynomials and compare it with Buchberger's algorithm.
Moreover, we compare its efficiency 
on the examples  \texttt{polsys50} from~\cite{eps}
to algorithms listed in~\cite[Tab.~1]{BaeGeLaRo}.

\begin{algorithm}[ContainsMonomial]
\label{algo:containsmonomial}
{\em Input: } 
generators $f_1,\ldots,f_s$
for an ideal $I\subseteq \KT{r}$.
\begin{itemize}
\item 
Define the semi-triangular system 
$S:=(\FFs,\emptyset,0,g)$
where $g := T_1\cdots T_r$,
and $\FFs := \{f_1,\ldots,f_s\}$.
\item 
Let $\SSS$ be the output of Algorithm~\ref{algo:triang}
applied to~$\{S\}$.
\item 
For each $S\in \SSS$, do:
\begin{itemize}
 \item 
 If Algorithm~\ref{algo:issolvable} returns {\em true\/}, then
 \begin{itemize}
  \item Return {\em false\/}.
 \end{itemize}
\end{itemize}
\item 
Return {\em true\/}.
\end{itemize}
{\em Output: }
{\em true\/} if $T^\mu \in I$ for some $\mu \in \ZZZ^r$.
Returns {\em false\/} otherwise.
\end{algorithm}

\begin{remark}
\label{rem:eff}
In the second line of Algorithm~\ref{algo:containsmonomial}
it is more efficient to modify Algorithm~\ref{algo:triang}
such that it checks for solutions immediately after determining
a new semi-triangular system.
\end{remark}

\begin{example}
\label{ex:containsmon}
In the setting of Example~\ref{ex:mush},
we apply Algorithm~\ref{algo:containsmonomial}
with Remark~\ref{rem:eff}
to test whether the ideal $I:=\<f_1,f_2\>\subseteq \KT{4}$
contains a monomial.
To this end, we apply 
Algorithm~\ref{algo:triang}
to the triangle mush $\SSS_0$.
It will first choose the polynomial division
for $(f,h):=(f_1,f_2)$ to obtain
\[
T_4f_1\ =\ 
(T_2-T_3)T_2f_2 - u,
\qquad
u\ =\ 
(T_2^3 - T_3T_2^2)T_4.
\]
This yields a new triangle mush
$\SSS_1 := \{S',S''\}$
where
$S':=
(\{f_2,u\},\emptyset,0,gT_4)$
and $S'':=(\{f_1,f_2,T_4\},\emptyset,0,g)$.
In the next step, we obtain triangle mushes
\begin{eqnarray*}
\SSS_2
&:=&
\left\{
(\{u\},\{f_2\},1,T_4g),\ 
(\{f_1,f_2,T_4\},\emptyset,0,g)
\right\},
\\
\SSS_3
&:=&
\left\{
(\emptyset,\{f_2,u\},4,T_4g),\ 
(\{f_1,f_2,T_4\},\emptyset,0,g)
\right\}.
\end{eqnarray*}

Algorithm~\ref{algo:issolvable} verifies 
that the zero-set $V(f_2,u)\setminus V(T_4g)$ is empty
by the following steps:
first, Algorithm~\ref{algo:normalize}
with input $(\emptyset,\{f_2,u\},4,T_4g)$
will return the monic system
\[
 \left(\emptyset, \{f_2,f_3\}, 4, T_4g\right),
 \qquad 
 f_3
 \ :=\ 
 (T_2-T_3)T_2^2.
\]
As $k(f_2)=1$ and $k(f_3)=2$,
we set $\LL := \KK(T_3,T_4)$
and the ring $R:=\LL[T_1,T_2]/\<f_2,f_3\>$
is integral over $\LL$
with $\LL$-basis $(1, \b{T_2}, \b{T_2}^2)$.
We have
\[
 \b{T_4g}\ =\ \b{(T_3-T_2)T_2T_3T_4}\ \in\ R,
 \qquad
 \b{T_4g}^2\ =\ \b{(T_3-T_2)^2T_2^2T_3^2T_4^2}\ =\ \b 0\ \in\ R,
\]
By Proposition~\ref{prop:solvable},
the algorithm may remove this triangular set, i.e.,
it remains to consider
\begin{eqnarray*}
\SSS_4
&:=&
\left\{
(\{f_1,f_2,T_4\},\emptyset,0,g)
\right\}.
\end{eqnarray*}
The reduction step will remove the redundant equation $f_2$.
The next steps provides us with 
\begin{eqnarray*}
\SSS_5
&:=&
\left\{
(\emptyset,\{f_1,T_4\},4,u'g),\ 
(\{f_1,T_4,u'\},\emptyset,0,g)
\right\},
\qquad
u'\ :=\ 
(T_2-T_3)T_2.
\end{eqnarray*}
By Algorithm~\ref{algo:issolvable},
the system $S:=(\emptyset,\{f_1,T_4\},4,u'g)$
has a solution:
similar to before, 
Algorithm~\ref{algo:normalize}
returns the monic system
\[
 \left(\emptyset,\{f_4,T_4\},4,u'g\right),
 \qquad 
 f_4
 \ :=\ 
 T_1-T_3
\]
with $k(f_4)=1$ and $k(T_4)=4$.
Setting $\LL:=\KK(T_2,T_3)$, the ring extension
$\LL\subseteq R:=\LL[T_1,T_4]/\<f_4,T_4\>$ is integral
with $\LL$-basis $(1)$.
Since
\[
 \b{u'g}\ =\ \b{(T_2-T_3)T_1T_2^2T_3}\ =\ \b{(T_2-T_3)T_2^2T_3^2}\ \in\ R
\]
is non-zero,
its minimal polynomial 
$p = X - (T_2-T_3)T_2^2T_3^2\in \LL[X]$
is not a monomial, i.e.,
$V(S)\ne \emptyset$ by Proposition~\ref{prop:solvable}.
Thus, $\SSS_0$ has a solution as we already witnessed 
in Example~\ref{ex:mush}.
In particular, $I$ contains no monomial, i.e.,
the algorithm returns {\em false\/}.
\end{example}

The remainder of this note is devoted to experimental running times.
We apply the \texttt{perl} implementation~\cite{monomtest:implem} of 
Algorithm~\ref{algo:containsmonomial}
to a series of random
ideals $\<f_1,\ldots,f_s\>\subseteq \KT{r}$ for fixed $2\leq s\leq 5$
and running $1\leq r\leq 10$.
Moreover, 
setting $\FF := \{f_1,\ldots,f_s\}$,
we distinguish the cases 
$V(\FF)=\emptyset$ and $V(\FF)\ne \emptyset$.

To make the experimental running times better comparable
to Buchberger's Gr\"obner basis algorithm~\cite{CoLiOSh},
we have reimplemented the latter in \texttt{perl}
in two variants: the first one is the classical version
whereas the second one stops as soon as a monomial could be found.
Both algorithms as well as the testing sets $\FF$ 
are available at~\cite{monomtest:implem}.
The following graphics show the averages over the successful tests.

\begin{center}
 \begin{minipage}{6cm}
 \footnotesize
 \begin{center}
 \includegraphics[width=5.5cm]{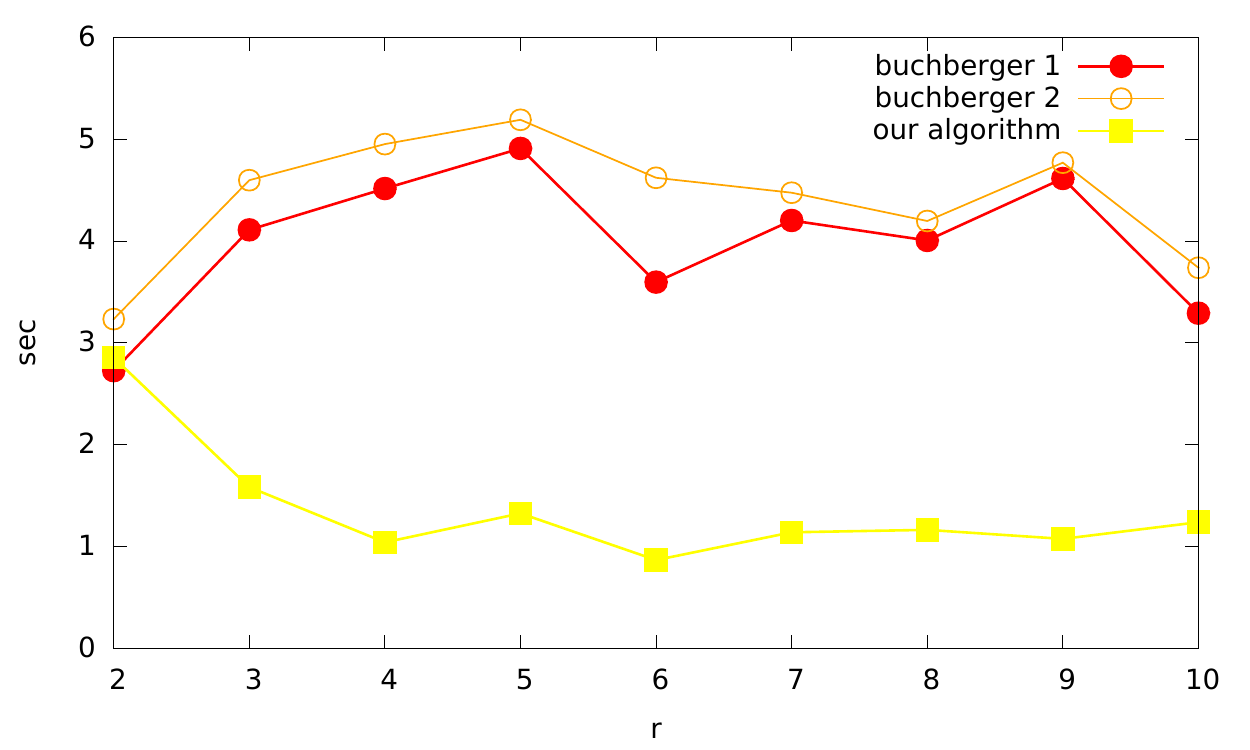}
%
%
%
%
 $V(\FF)\ne\emptyset$, \quad $s=2$
 \end{center}
 \end{minipage}
\ \  
\begin{minipage}{6cm}
\footnotesize
\begin{center}
\includegraphics[width=5.5cm]{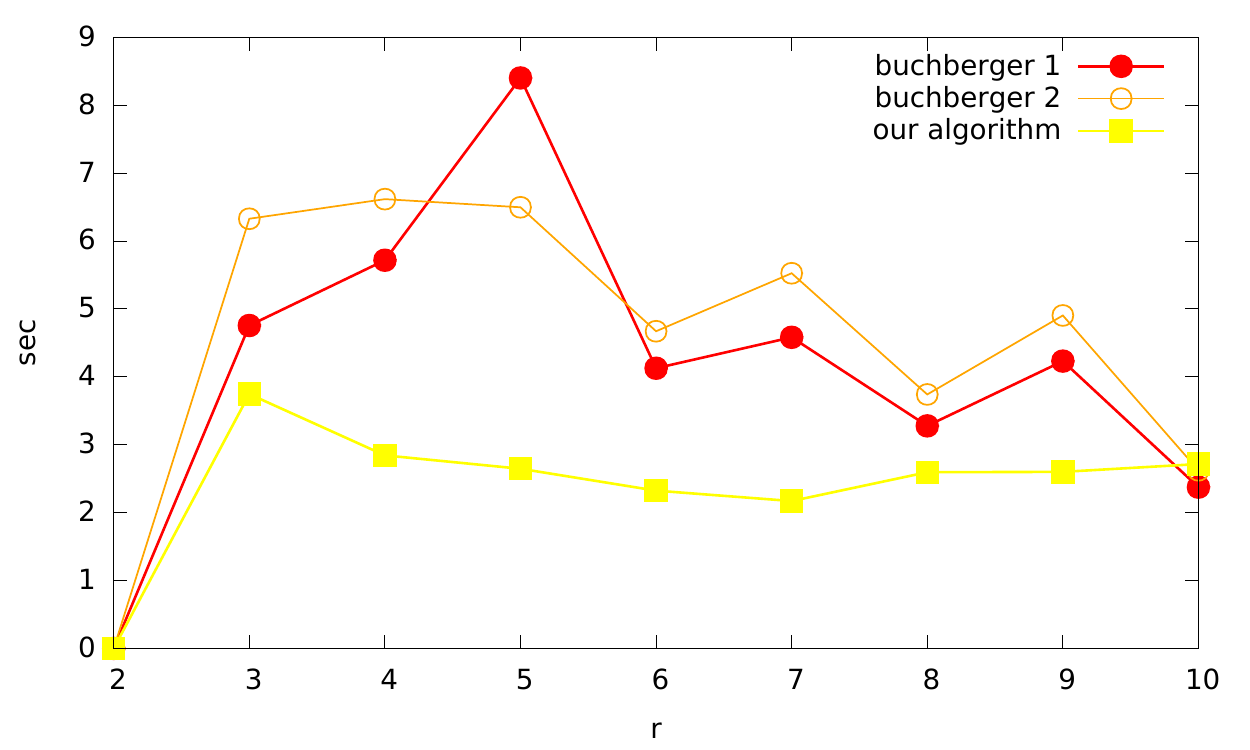}
%
%
%
%
 $V(\FF)\ne\emptyset$, \quad $s=3$
 \end{center}
\end{minipage}
\\ 
\begin{minipage}{6cm}
\footnotesize
\begin{center}
\includegraphics[width=5.5cm]{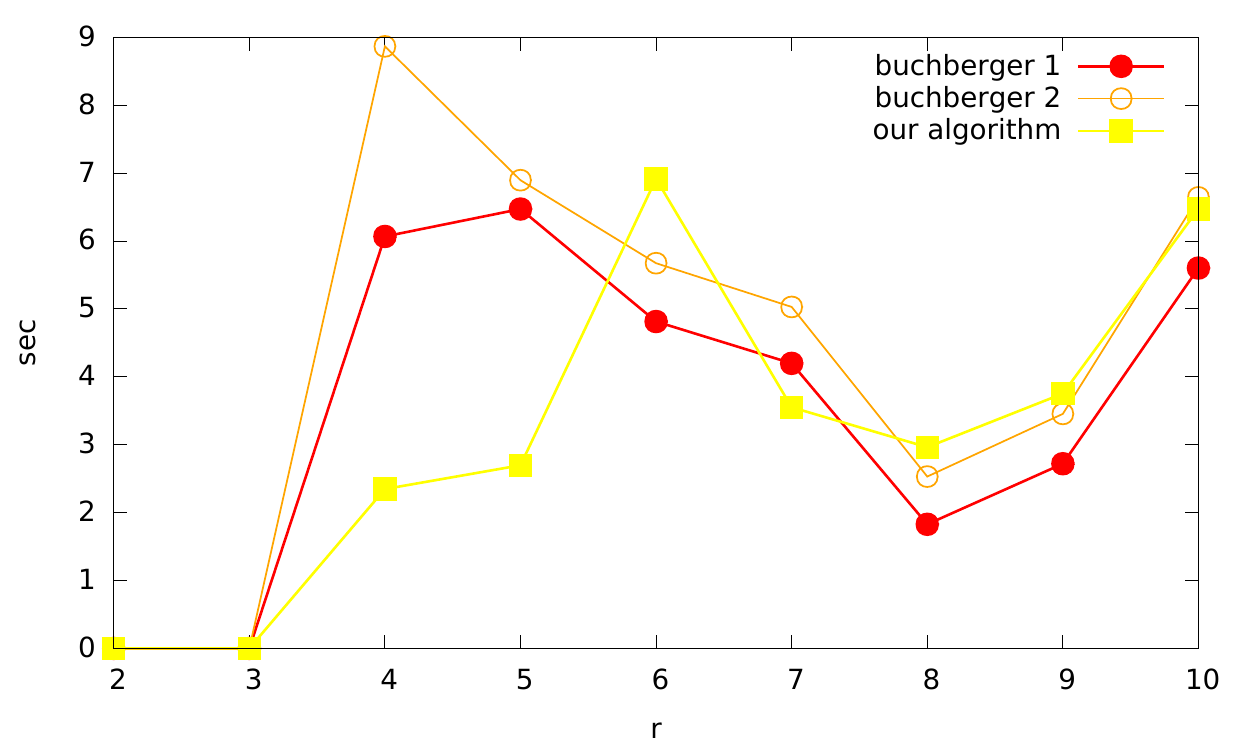}
%
%
%
%
 $V(\FF)\ne\emptyset$, \quad $s=4$
 \end{center}
\end{minipage}
\ \ 
\begin{minipage}{6cm}
\footnotesize
\begin{center}
\includegraphics[width=5.5cm]{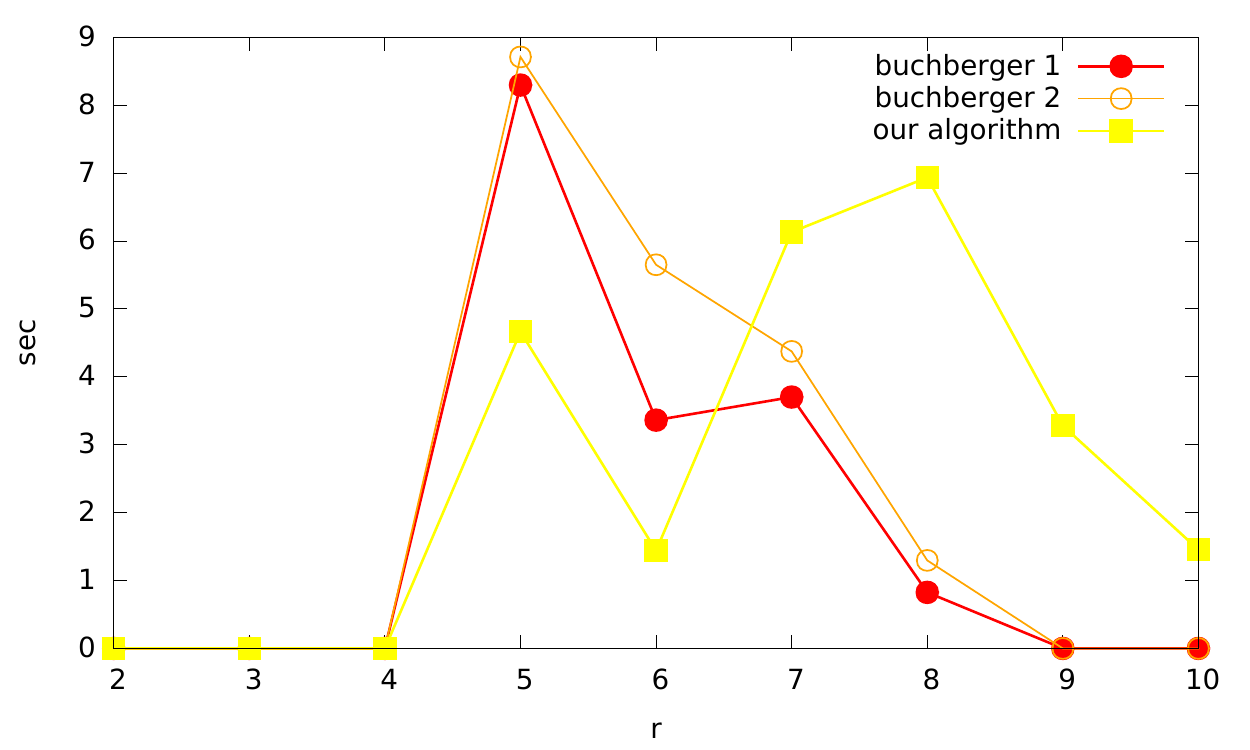}
%
%
%
%
 $V(\FF)\ne\emptyset$, \quad $s=5$
 \end{center}
\end{minipage}
\\

 \begin{minipage}{6cm}
 \footnotesize
 \begin{center}
 \includegraphics[width=5.5cm]{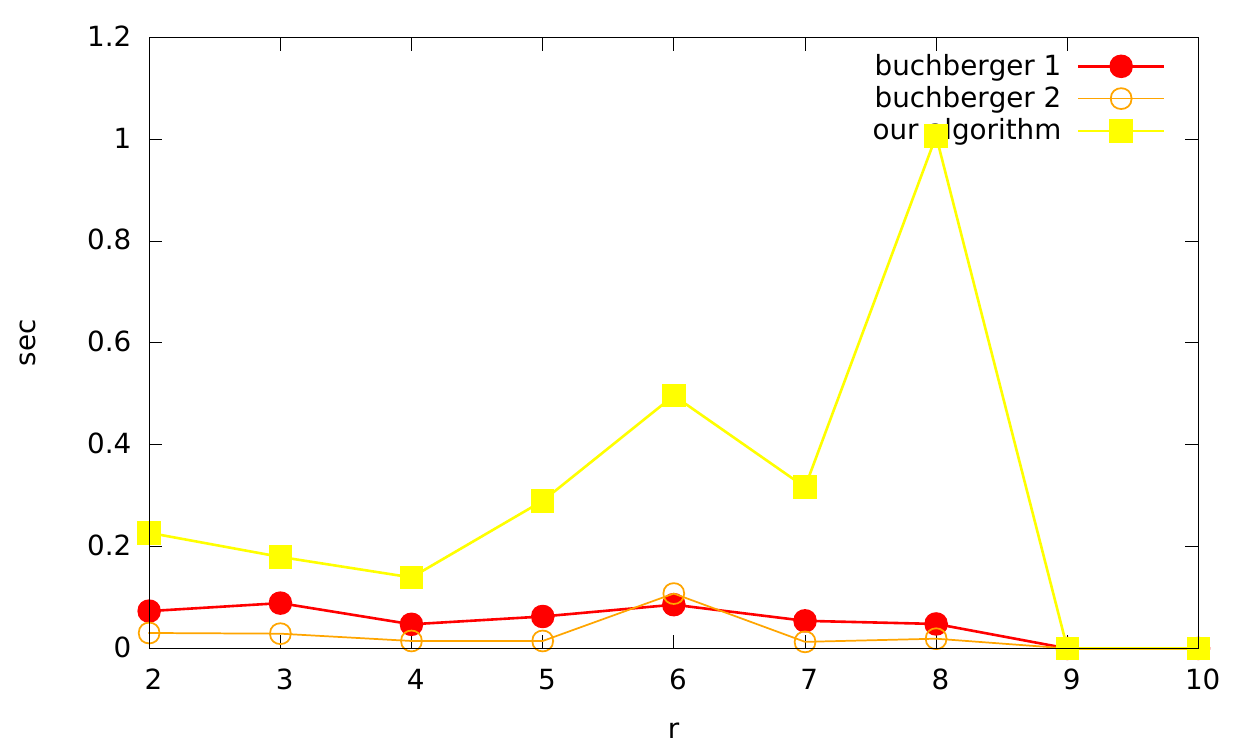}
%
%
%
%
 $V(\FF)=\emptyset$, \quad $s=2$
 \end{center}
 \end{minipage}
\ \  
\begin{minipage}{6cm}
\footnotesize
\begin{center}
\includegraphics[width=5.5cm]{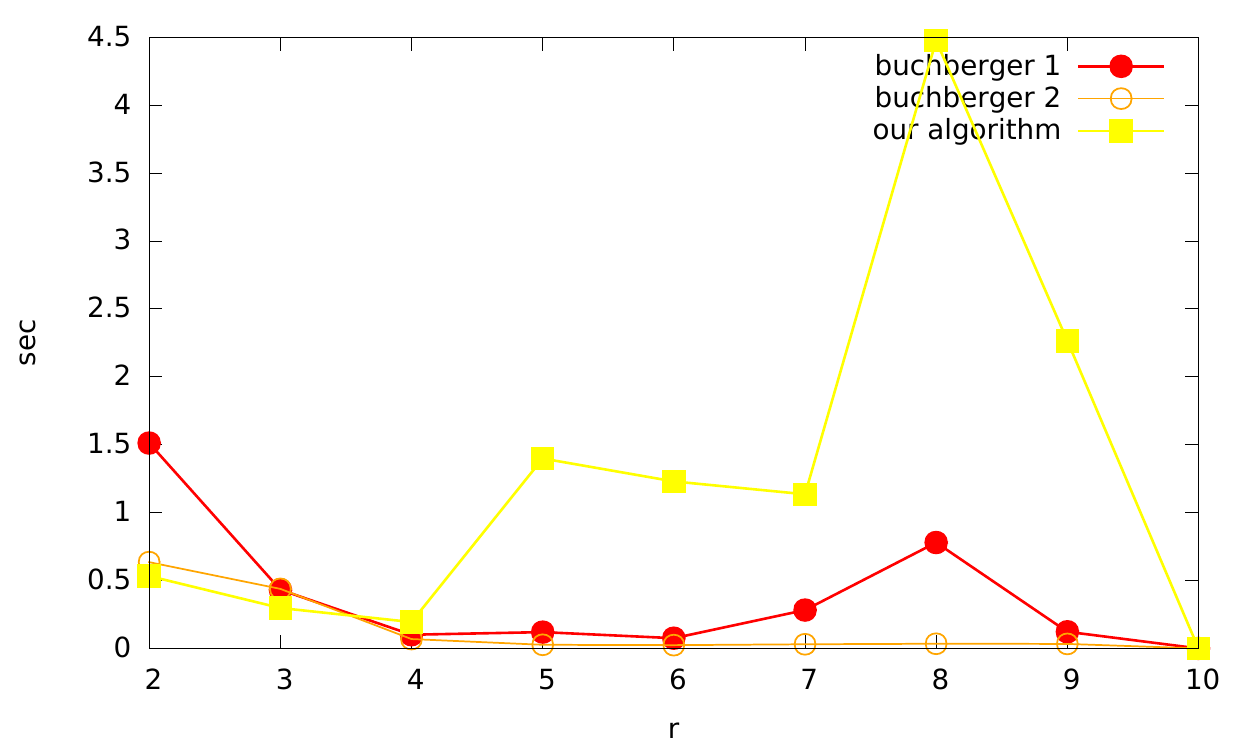}
%
%
%
%
 $V(\FF)=\emptyset$, \quad $s=3$
 \end{center}
\end{minipage}
\\ 
\begin{minipage}{6cm}
\footnotesize
\begin{center}
\includegraphics[width=5.5cm]{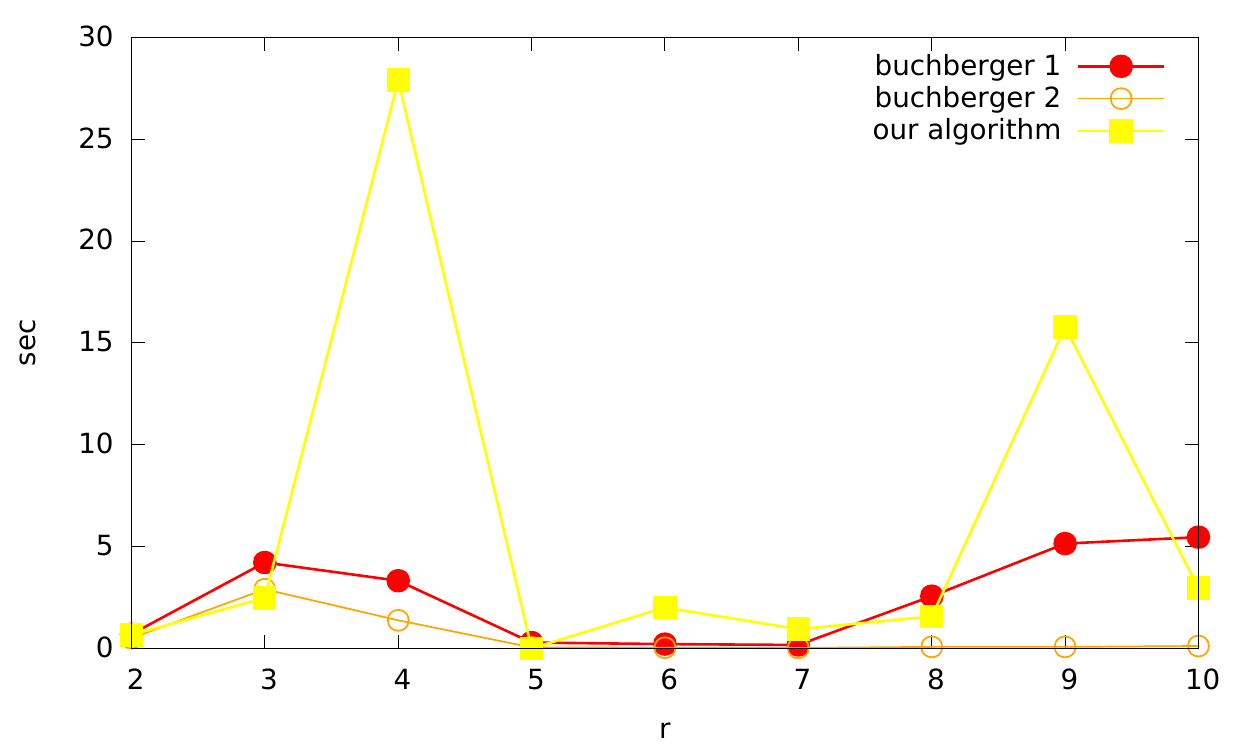}
%
%
%
 $V(\FF)=\emptyset$, \quad $s=4$
 \end{center}
\end{minipage}
\ \ 
\begin{minipage}{6cm}
\footnotesize
\begin{center}
\includegraphics[width=5.5cm]{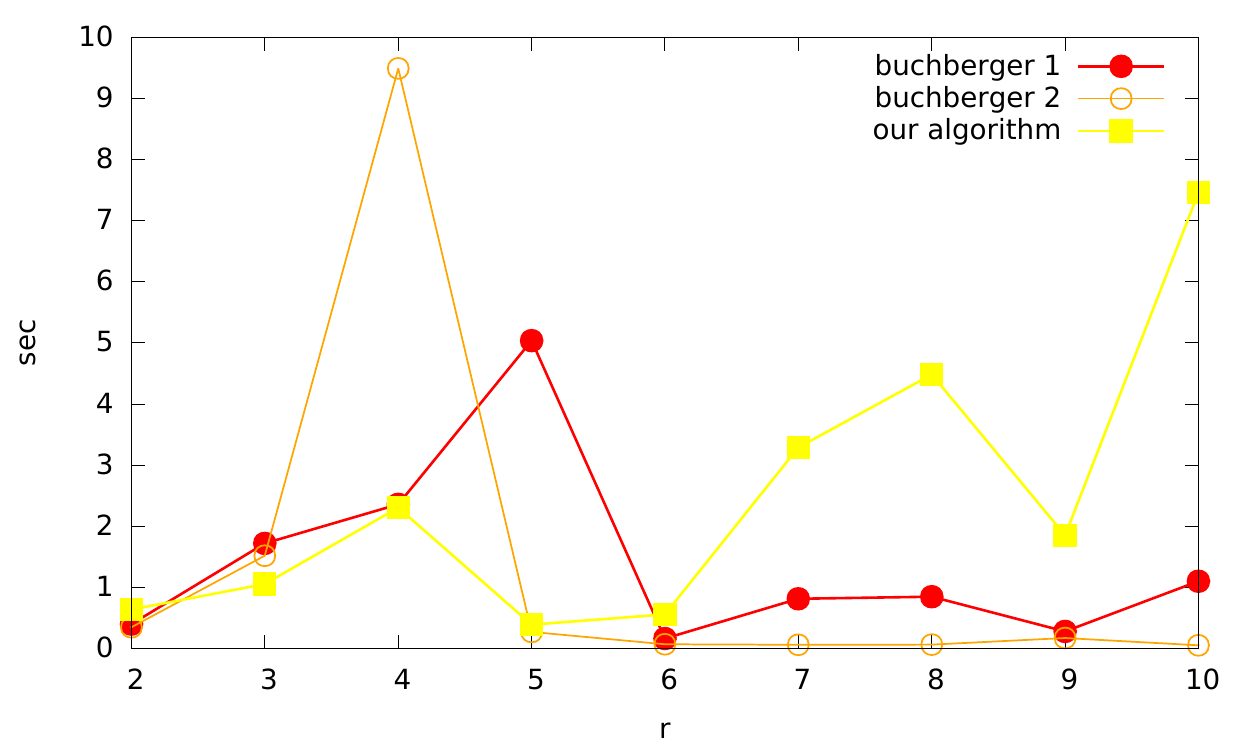}
%
%
%
%
 $V(\FF)=\emptyset$, \quad $s=5$
 \end{center}
\end{minipage}
\end{center}

On the given set of polynomials, 
Algorithm~\ref{algo:containsmonomial}
seems to be competitive when $V(\FF)\ne \emptyset$
whereas, for $V(\FF)=\emptyset$, the classical
Buchberger's algorithm usually needs less time.

Additionally, we have applied Algorithm~\ref{algo:containsmonomial}
to the set of examples \texttt{polsys50} from~\cite{eps};
its running time as well as the number of performed additions on 
a 2.66 GHz machine with time bound $300$ seconds and at most 1 GB of RAM
is listed in the left-hand side part of the following table.
We write ``n/a'' if the computation was unsuccessful either due to time
reasons or because it was out of memory.

Moreover, in the right-hand part of the table, we list some of the running 
times listed in~\cite[Table~1]{BaeGeLaRo} on the same examples.
We want to stress the fact that 
the two sides of this table are 
only marginally comparable: 
not only is the goal different (\cite{BaeGeLaRo} deduces more information
on the solutions whereas we test the existence of solutions),
also the machines and maximal running times / memory are different.

\begingroup
\tiny
\begin{longtable}{rrcrc|rrr}
\myhline
no.
&  
time~\ref{algo:containsmonomial}
&
result
&
add.s
&
&
time RC1
&
time DW1
&
time AT1
\\
\myhline

1 & $>$ 300 & n/a & n/a & & 3.5 & 0.4 & 3.0 \\ \hline
2 & $>$ 300 & n/a & n/a & & 7.4 & 7.6 & 7.1 \\ \hline
3 & 30.89 & 1 & 4956 & & $>$ 3h & 985.7 & 7538.0\\ \hline
4 & $>$ 300 & n/a & n/a & & $>$ 4 GB & $>$ 4 GB & 0.2 \\ \hline
5 & 0.62 & 0 & 2449 & &  &\\ \hline
6 & 2.25 & 1 & 4239 & & 0.4 & 0.1 & 0.2\\ \hline
7 & $>$ 1 GB & n/a & n/a &  & $>$ 3h & 7352.6 & $>$ 4 GB \\ \hline
8 & 0.14 & 1 & 214 & & &\\ \hline
9 & 11.75 & 1 & 10149 & & &\\ \hline
10 & 0.21 & 1 & 517 & & &\\ \hline
11 & 0.17 & 1 & 361 & & &\\ \hline
12 & 0.74 & 1 & 1909 & & 0.5 & 0.3 & 0.4\\ \hline
13 & 0.15 & 1 & 214 & &  &\\ \hline
14 & 0.23 & 1 & 442 & & 0.5 & $>$ 3h & 1.5\\ \hline
15 & 29.82 & 1 & 6655 & & &\\ \hline
16 & $>$ 300 & n/a & n/a &  & 0.9 & 1.4 & 1.8 \\ \hline
17 & $>$ 300 & n/a & n/a &  & 6.5 & 4.7 & 75.5 \\ \hline
18 & 2.34 & 1 & 4324 & & 0.3 & 0.1 & 0.1\\ \hline
19 & $>$ 300 & n/a & n/a &  & 419.9 & 0.4 & 0.4 \\ \hline
20 & 0.25 & 1 & 668 & & &\\ \hline
21 & $>$ 300 & n/a & n/a &  & 1.6 & 86.6 & 4.5\\ \hline
22 & $>$ 300 & n/a & n/a &  & 0.6 & 1.2 & 1.5 \\ \hline
23 & $>$ 300 & n/a & n/a &  & 0.4 & 0.1 & 29.5 \\ \hline
24 & $>$ 300 & n/a & n/a &  & 1.2 & 1.3 & 1.0 \\ \hline
25 & 0.25 & 1 & 537 & & 1.2 & $>$ 3h & $>$ 4 GB\\ \hline
26 & 1.07 & 0 & 4610 & & &\\ \hline
27 & 1.47 & 1 & 2320 & & &\\ \hline
28 & 9.89 & 1 & 6632 & & &\\ \hline
29 & 0.15 & 1 & 297 & & 0,3 & 0,3 & 0,3\\ \hline
30 & $>$ 300 & n/a & n/a &  & $>$ 4 GB & $>$ 4 GB & 45.3 \\ \hline
31 & $>$ 1 GB & n/a & n/a &  & $>$ 4 GB & $>$ 4 GB & $>$ 3h \\ \hline
32 & 0.41 & 1 & 1200 & & &\\ \hline
33 & $>$ 1 GB & n/a & n/a &  & 3.4 & 1.3 & 3.5 \\ \hline
34 & $>$ 300 & n/a & n/a &  & 911.5 & $>$ 3h & $>$ 4 GB \\ \hline
35 & $>$ 300 & n/a & n/a &  & 1.5 & 1.2 & 1.7 \\ \hline
36 & 0.13 & 1 & 160 & & &\\ \hline
37 & 0.27 & 1 & 633 & & &\\ \hline
38 & 11.10 & 1 & 6878 & & &\\ \hline
39 & $>$ 300 & n/a & n/a &  & 0.6 & 1.2 & 0.6 \\ \hline
40 & $>$ 300 & n/a & n/a &  &  & \\ \hline
41 & $>$ 300 & n/a & n/a &  & 1.5 & 1.5 & 7.0 \\ \hline
42 & 0.35 & 1 & 1028 & & &\\ \hline
43 & $>$ 1 GB & n/a & n/a &  & 0.7 & 3.1 & 0.2 \\ \hline
44 & $>$ 300 & n/a & n/a &  & 24.5 & 3.4 & 1.2 \\ \hline
45 & $>$ 300 & n/a & n/a &  & & \\ \hline
46 & $>$ 300 & n/a & n/a &  & & \\ \hline
47 & 16.40 & 1 & 10465 & & 1.3 & 2.8 & 13.0\\ \hline
48 & 0.23 & 1 & 563 & & &\\ \hline
49 & $>$ 300 & n/a & n/a & & 0.3 & 610.2 & 0.5 
\\
\myhline
\end{longtable}
\endgroup


\begin{thebibliography}{10}

\bibitem{AuLaMoMa}
P.~Aubry, D.~Lazard, and M.~Moreno~Maza.
\newblock On the theories of triangular sets.
\newblock {\em J. Symbolic Comput.}, 28(1-2):105--124, 1999.
\newblock Polynomial elimination---algorithms and applications.

\bibitem{AuMoma}
P.~Aubry and M.~Moreno~Maza.
\newblock Triangular sets for solving polynomial systems: a comparative
  implementation of four methods.
\newblock {\em J. Symbolic Comput.}, 28(1-2):125--154, 1999.
\newblock Polynomial elimination---algorithms and applications.

\bibitem{BaeGeLaRo}
T.~B{\"a}chler, V.~Gerdt, M.~Lange-Hegermann, and D.~Robertz.
\newblock Algorithmic {T}homas decomposition of algebraic and differential
  systems.
\newblock {\em J. Symbolic Comput.}, 47(10):1233--1266, 2012.

\bibitem{BoJeSpeStu}
T.~Bogart, A.~N. Jensen, D.~Speyer, B.~Sturmfels, and R.~R. Thomas.
\newblock Computing tropical varieties.
\newblock {\em J. Symbolic Comput.}, 42(1-2):54--73, 2007.

\bibitem{Chen}
C.~Chen, O.~Golubitsky, F.~Lemaire, M.~Maza, and W.~Pan.
\newblock Comprehensive triangular decomposition.
\newblock {\em CASC 2007, LNCS 4770, Springer-Verlag Berlin Heidelberg}, pages
  73--101, 2007.

\bibitem{CoLiOSh}
D.~Cox, J.~Little, and D.~O'Shea.
\newblock {\em Ideals, varieties, and algorithms}.
\newblock Undergraduate Texts in Mathematics. Springer, New York, third
  edition, 2007.
\newblock An introduction to computational algebraic geometry and commutative
  algebra.

\bibitem{DeLo}
W.~Decker and C.~Lossen.
\newblock {\em Computing in algebraic geometry}, volume~16 of {\em Algorithms
  and Computation in Mathematics}.
\newblock Springer-Verlag, Berlin; Hindustan Book Agency, New Delhi, 2006.
\newblock A quick start using SINGULAR.

\bibitem{Ritt}
J.~F.~Ritt.
\newblock Differential algebra.
\newblock {\em American Mathematical Society Colloquium Publications, AMS
  N.~Y.}, Vol.~XXXIII, 1950.

\bibitem{GrePfi}
G.-M. Greuel and G.~Pfister.
\newblock {\em A {S}ingular introduction to commutative algebra}.
\newblock Springer, Berlin, extended edition, 2008.
\newblock With contributions by Olaf Bachmann, Christoph Lossen and Hans
  Sch{\"o}nemann.

\bibitem{Ke}
S.~Keicher.
\newblock Computing the {GIT}-fan.
\newblock {\em Internat. J. Algebra Comput.}, 22(7):1250064, 11, 2012.

\bibitem{kremer}
T.~Kremer.
\newblock Ein {R}adikalmitgliedschaftsalgorithmus.
\newblock Diploma thesis, 2015.

\bibitem{monomtest:implem}
T.~Kremer.
\newblock An Algorithm for the determination of $\aa$-faces, radical membership and monomial containment.
\newblock Available at \url{https://www.math.uni-tuebingen.de/users/kremer/afaces/}, 2015.

\bibitem{thomas}
J.~M. Thomas.
\newblock Differential systems.
\newblock {\em AMS Colloquium Publications}, XXI, 1937.

\bibitem{thomas2}
J.~M. Thomas.
\newblock Systems and roots.
\newblock {\em The William Byrd Press, INC, Richmond Virginia}, 1962.

\bibitem{eps}
D.~Wang.
\newblock $\epsilon$psilon, for \texttt{Maple} including sets of examples.
\newblock Available at \url{http://www-spiral.lip6.fr/~wang/epsilon/}, 2003.

\bibitem{Wa}
D.~Wang.
\newblock Decomposing polynomial systems into simple systems.
\newblock {\em J. Symbolic Comput.}, 25(3):295--314, 1998.

\bibitem{Wu}
W.-T. Wu.
\newblock Mathematics mechanization.
\newblock {\em Mathematics and its Applications, Kluwer Academic Publishers
  Group}, 489, 2000.

\bibitem{ZaSa}
O.~Zariski and P.~Samuel.
\newblock {\em Commutative algebra. {V}ol. {II}}.
\newblock Springer-Verlag, New York-Heidelberg, 1975.
\newblock Reprint of the 1960 edition, Graduate Texts in Mathematics, Vol. 29.

\end{thebibliography}
\end{document}